\documentclass[11pt,oneside]{amsart}
\usepackage[top=25mm, bottom=25mm, left=25mm, right=25mm]{geometry}
\usepackage{amsfonts,amssymb,amsmath,amsthm,amsxtra}
\usepackage[T1]{fontenc}
\usepackage[utf8]{inputenc}
\usepackage{bm}
\usepackage{mathtools}
\usepackage{dsfont}
\usepackage{bbm}
\usepackage{mathrsfs}
\usepackage{enumitem}

\usepackage{graphics}

\numberwithin{equation}{section}
\newtheorem{theorem}[equation]{Theorem}

\newtheorem{lemma}[equation]{Lemma}

\theoremstyle{definition}

\newcommand{\CC}{\mathbb{C}}
\newcommand{\DD}{\mathbb{D}}
\newcommand{\EE}{\mathbb{E}}

\newcommand{\GG}{\mathbb{G}}

\newcommand{\II}{\mathbb{I}}

\newcommand{\NN}{\mathbb{N}}

\newcommand{\QQ}{\mathbb{Q}}
\newcommand{\RR}{\mathbb{R}}

\newcommand{\TT}{\mathbb{T}}

\newcommand{\ZZ}{\mathbb{Z}}

\newcommand{\C}{\mathbb{C}}
\newcommand{\N}{\mathbb{N}}

\newcommand{\calP}{\mathcal{P}}
\newcommand{\calF}{\mathcal{F}}
\newcommand{\calB}{\mathcal{B}}

\newcommand{\calH}{\mathcal{H}}

\newcommand{\calT}{\mathcal{T}}

\newcommand{\calS}{\mathcal{S}}
\newcommand{\calD}{\mathcal{D}}

\newcommand{\calL}{\mathcal{L}}

\newcommand{\dif}{\mathrm{d}}
\newcommand{\ex}{\bm{e}}

\newcommand*{\DMO}[1]{\expandafter\DeclareMathOperator\csname #1\endcsname {#1}}
\DMO{ord}
\DMO{supp}
\DMO{Sym}
\DMO{cl}
\DMO{lcm}
\DMO{dist}
\DMO{tr}
\DMO{diam}

\DeclarePairedDelimiter\abs{\lvert}{\rvert}

\DeclarePairedDelimiter\norm{\lVert}{\rVert}

\DeclarePairedDelimiter\ceil{\lceil}{\rceil}
\DeclarePairedDelimiterX\spr[2]{\langle}{\rangle}{#1,#2}

\DeclarePairedDelimiterX\Set[2]{\{}{\}}{#1\colon #2}
\DeclarePairedDelimiterX\Seq[1]{(}{)}{#1}
\newcommand\blfootnote[1]{%
  \begingroup
  \renewcommand\thefootnote{}\footnote{#1}%
  \addtocounter{footnote}{-1}%
  \endgroup
}

\begin{document}
\title[Oscillation for singular Radon operators]
	{Oscillation estimates for truncated singular Radon operators}

\author{Wojciech S{\l}omian}
\address[Wojciech S{\l}omian]{
	  Faculty of Pure and Applied Mathematics, 
	  Wroc\l{}aw University of Science and Technology\\
	  Wyb{.} Wyspia\'nskiego 27,
	  50-370 Wroc\l{}aw, Poland}
    \email{wojciech.slomian@pwr.edu.pl}
\subjclass[2020]{42B25, 42B20, 42B15}
\keywords{Radon transform, singular integral,oscillation seminorm}
\thanks{The author was
partially supported by the National Science Centre in Poland, grant
Opus 2018/31/B/ST1/00204.}

\begin{abstract}
In this paper we prove uniform oscillation estimates on $L^p$, with $p\in(1,\infty)$, for truncated singular integrals of the Radon type associated with the Calder\'on--Zygmund kernel, both in continuous and discrete settings. In the discrete case we use the Ionescu--Wainger multiplier theorem and the Rademacher--Menshov inequality to handle the number-theoretic nature of the discrete singular integral.  The result we obtained in the continuous setting can be seen as a generalisation of the results of Campbell, Jones, Reinhold and Wierdl for the continuous singular integrals of the Calder\'on--Zygmund type.
\end{abstract}
\maketitle
		\blfootnote{
The paper is a part of author's doctoral thesis written under the supervision of Professor Mariusz Mirek.
}
\tableofcontents
\section{Introduction}
\label{sec:1}
The purpose of this paper is to give a proof of oscillation inequalities for truncated singular integrals of the Radon type, both in continuous and discrete settings.

Let $k\geq1$ be a natural number and let $\Gamma$ be a fixed finite subset of $\N_0^k\setminus\{0\}$ equipped with the lexicographic order. The space of tuples of real numbers and integer numbers labeled by multi-indices $\gamma=(\gamma_1,\dots,\gamma_k)\in\Gamma$ will be denoted by $\RR^\Gamma$ and $\ZZ^\Gamma$, respectively. Clearly, one has $\RR^\Gamma\cong\RR^{|\Gamma|}$ and $\ZZ^\Gamma\cong\ZZ^{|\Gamma|}$. The canonical polynomial mapping associated with the set $\Gamma$ is defined by
\begin{equation*}
    \RR^k\ni x=(x_1,\dots,x_k)\mapsto(x)^\Gamma:=(x^\gamma\colon\gamma\in\Gamma)\in\RR^\Gamma,
\end{equation*}
where $x^\gamma=x_1^{\gamma_1}x_2^{\gamma_2}\cdots x_k^{\gamma_k}$. The symbol $B(x, t)$ stands for an open Euclidean ball in $\RR^k$ with radius $t>0$ centered at a point $x\in\RR^k$. Let $\Omega\subset\RR^k$ be a bounded convex open set such that $B(0,c_{\Omega}) \subseteq \Omega \subseteq B(0,1)$ for some $c_{\Omega}\in(0, 1)$. For $t>0$, the dilates of the set $\Omega$ are defined as 
$$\Omega_t := \{x\in\RR^{k}: t^{-1}x\in\Omega\}.$$

Let $K\colon\RR^{k}\setminus\{0\} \to \CC$ be the Calder\'on--Zygmund kernel satisfying the following conditions.
\begin{enumerate}
\item The size condition. For every $x\in\RR^k\setminus\{0\}$, we have
\begin{equation}
\label{eq:size-unif}
\abs{K(x)} \leq \abs{x}^{-k}.
\end{equation}
\item The cancellation condition. For every $0<r<R<\infty$, we have
\begin{equation}
\label{eq:cancel}
\int_{\Omega_{R}\setminus \Omega_{r}} K(y) \dif y = 0.
\end{equation}
\item
  The H\"older continuity condition.
  For some $\sigma\in(0, 1]$ and every $x, y\in\RR^k\setminus\{0\}$ with $|y|\leq |x|/2$, we have
\begin{equation}
\label{eq:K-modulus-cont}
\abs{K(x)-K(x+y)}
\leq
\abs{y}^{\sigma} \abs{x}^{-k-\sigma}.
\end{equation}
\end{enumerate}

Now, for finitely supported functions $f\colon\ZZ^{\Gamma}\to \CC$ and $t>0$, we may define the discrete truncated Radon singular operator by setting
\begin{align}
H_t f(x):= \sum_{y \in\Omega_{t}\cap\ZZ^{k}\setminus\{0\}} f(x-(y)^\Gamma)K(y),\quad x\in\ZZ^\Gamma,
\end{align}
where $K$ is the Calder\'on--Zygmund kernel which satisfies the conditions listed above. The operator $H_t$ is a discrete counterpart of the continuous singular Radon transform. Recall that for a given smooth compactly supported function $f\colon\RR^\Gamma\to\CC$ the continuous singular Radon transform of $f$ is defined as
\begin{equation}\label{histRadoncon1}
    \calH_t f(x):={\rm p.v.}\int_{\Omega_t}f(x-(y)^\Gamma)K(y){\rm d}y,\quad x\in\RR^\Gamma.
\end{equation}
\indent Let $\II\subseteq\RR$. For a given increasing sequence $I=(I_j: j\in\NN)\subseteq\II$, the truncated
oscillation seminorm is defined for any family of measurable
functions
$(\mathfrak a_t(x): t>0)\subseteq\CC$ by setting
\begin{align}
\label{eq:45}
O_{I, N}^2(\mathfrak a_t(x): t\in \II)
= \Big(\sum_{j=1}^N\sup_{\substack{I_j \le t < I_{j+1}\\t\in\II}}
\abs{\mathfrak a_t(x)-\mathfrak a_{I_j}(x)}^2\Big)^{1/2}
\quad \text{for all} \quad N\in\NN\cup\{\infty\}.
\end{align}
Our result in the case of the discrete singular operator reads as follows.
\begin{theorem}\label{Thm1}
Let $k\ge 1$ and let consider $\Gamma\subset\N_0^k\setminus\{0\}$ such that $|\Gamma|<\infty$. Then for any $p\in(1,\infty)$ there is a constant $C_{p,k,|\Gamma|}>0$ such that
\begin{equation}\label{oscileqcanon}
\sup_{N\in\NN}\sup_{I\in\mathfrak{S}_N(\RR_+)}\norm[\big]{O_{I,N}^2(H_t f:t\in\RR_+)}_{\ell^p(\ZZ^\Gamma)}\le C_{p,k,|\Gamma|}\norm{f}_{\ell^p(\ZZ^\Gamma)},\quad f\in \ell^p(\ZZ^\Gamma),
\end{equation}
where $\mathfrak{S}_N(\RR_+)$ is the set of all strictly increasing sequences of length $N+1$ contained in $\RR_+$ (see Section~\ref{sec:notation}).
\end{theorem}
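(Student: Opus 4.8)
The plan is to follow the now-standard scheme for oscillation inequalities for discrete Radon-type operators: a dyadic reduction, a circle-method decomposition of the Fourier multiplier, and a transference---via the Ionescu--Wainger multiplier theorem and the Rademacher--Menshov inequality---of the analytic input from the continuous singular Radon transform $\calH_t$. By a routine limiting argument it suffices to prove \eqref{oscileqcanon} for finitely supported $f$ with a constant independent of $\supp f$. First I would split $\RR_+$ into the dyadic blocks $[2^n,2^{n+1})$ and invoke the elementary splitting inequality for the oscillation seminorm: for every $N$ and every $I\in\mathfrak S_N(\RR_+)$,
\begin{equation*}
O_{I,N}^2(H_tf:t\in\RR_+)\lesssim O_{I',N'}^2(H_{2^n}f:n\in\ZZ)+\calS f,\qquad \calS f:=\Big(\sum_{n\in\ZZ}V^2\big(H_tf:2^n\le t<2^{n+1}\big)^2\Big)^{1/2},
\end{equation*}
where $V^2$ is the strong $2$-variation in $t$, $I'=\big(2^{\floor{\log_2 I_j}}\big)_j$ with repetitions removed, and the implied constant is absolute. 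This reduces matters to two estimates on $\ell^p(\ZZ^\Gamma)$, uniform in $N$ and $I$: the \emph{long} bound $\sup_N\sup_{I'}\norm{O_{I',N'}^2(H_{2^n}f:n\in\ZZ)}_{\ell^p}\lesssim\norm{f}_{\ell^p}$, and the \emph{short} bound $\norm{\calS f}_{\ell^p}\lesssim\norm{f}_{\ell^p}$. The short term I would handle by adapting the argument of Campbell--Jones--Reinhold--Wierdl: by the cancellation \eqref{eq:cancel}, for $2^n\le s<t<2^{n+1}$ the difference $H_tf-H_sf$ is a convolution operator on $\ZZ^\Gamma$ whose coefficients $\sum_{y}K(y)$ over $(\Omega_t\setminus\Omega_s)\cap\ZZ^k$ sum to $O(2^{-n\sigma})$ (the discrepancy between this lattice sum and the vanishing integral $\int_{\Omega_t\setminus\Omega_s}K$ being controlled by \eqref{eq:K-modulus-cont}); combining this near-mean-zero structure with a Rademacher--Menshov decomposition of $t\mapsto H_tf$ inside each block, $\ell^2$-orthogonality via Plancherel on $\TT^\Gamma$, and vector-valued Calder\'on--Zygmund theory yields $\norm{\calS f}_{\ell^p}\lesssim\norm{f}_{\ell^p}$; the discrete setting is slightly easier here since each block contributes only finitely many terms.

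It remains to bound the long oscillation, which is the heart of the matter. Write $\mathfrak m_n$ for the Fourier multiplier of $H_{2^n}$ on $\TT^\Gamma$ and $\Psi_n$ for the Fourier multiplier of $\calH_{2^n}$ on $\RR^\Gamma$. The circle method, together with the Ionescu--Wainger construction of the relevant families of denominators, provides a decomposition $\mathfrak m_n=\sum_{s\ge 0}\big(\mathfrak m_n^{s}+\mathfrak e_n^{s}\big)$ in which: (i) the main part $\mathfrak m_n^{s}$ is supported near the rationals $a/q$ with $q$ in the $s$-th level of the Ionescu--Wainger family and equals, modulo acceptable error, $\sum_{a/q}\mathfrak g(a/q)\,\Psi_n(\xi-a/q)\,\eta_s(\xi-a/q)$, with Gauss-sum coefficients satisfying $\abs{\mathfrak g(a/q)}\lesssim q^{-\delta}$ and $\eta_s$ a smooth cutoff to a ball of radius $\sim 2^{-\kappa s}$ around $a/q$; and (ii) the error symbols $\mathfrak e_n^{s}$ obey Weyl-type bounds gaining a factor $\lesssim 2^{-\delta s}$ and, crucially, are negligible once $n$ leaves a window of length $O(s)$---this last point coming from the stationary-phase/van der Corput estimates for $\Psi_n$ afforded by \eqref{eq:size-unif}--\eqref{eq:K-modulus-cont}, which show that on the annulus $\abs{\xi}\sim 2^{-\kappa s}$ the symbol $\Psi_n$ transitions between $\approx 0$ and its limiting value over only $O(s)$ values of $n$.

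The error parts are summed directly. Using $O_{I',N'}^2\le V^2$, a Rademacher--Menshov reduction to the $O(s)$ active scales, Plancherel on $\TT^\Gamma$ for the $\ell^2$ estimate, and the $\ell^p$-boundedness of the attendant maximal and Calder\'on--Zygmund operators, the level-$s$ error contributes $\lesssim 2^{-\delta's}\norm{f}_{\ell^p}$, and $\sum_s 2^{-\delta's}<\infty$. For the main parts I would apply the $\ell^2(\ZZ)$-valued Ionescu--Wainger multiplier theorem, which transfers the oscillation estimate from the \emph{continuous} family $(\calH_{2^n})_{n\in\ZZ}$ to the discrete level-$s$ operator with symbol $\mathfrak m_n^{s}$, at the cost of the Ionescu--Wainger loss (which is $O_\veps(2^{\veps s})$ for every $\veps>0$) but with the Gauss-sum gain $2^{-\delta s}$ harvested through the Ionescu--Wainger projections. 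The required continuous oscillation estimate,
\begin{equation*}
\sup_{N}\ \sup_{J}\ \norm[\big]{O_{J,N}^2(\calH_{2^n}f:n\in\ZZ)}_{L^p(\RR^\Gamma)}\lesssim\norm{f}_{L^p(\RR^\Gamma)},
\end{equation*}
is the generalisation of Campbell--Jones--Reinhold--Wierdl to the Radon setting that I would establish separately (by the same dyadic split, with the continuous short/long variation and Littlewood--Paley estimates replacing the arithmetic input, and stationary phase controlling $\Psi_n$); it enters here as an essentially black-box input after a Littlewood--Paley localisation matched to the scale $2^{-\kappa s}$. Because that localisation confines the frequencies to the annulus on which $\Psi_n$ is active for only $O(s)$ values of $n$, the oscillation in $n$ effectively ranges over $O(s)$ scales, contributing a factor $\lesssim s^{1/2}$; the level-$s$ main part is therefore bounded by $\lesssim s^{1/2}\cdot 2^{\veps s}\cdot 2^{-\delta s}\norm{f}_{\ell^p}$ for every $\veps>0$, and choosing $\veps<\delta$ and summing the geometric series over $s$ closes the estimate.

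The step I expect to be the main obstacle is the balancing carried out above: one must run the Ionescu--Wainger transference \emph{uniformly in the oscillation parameters $N$ and $I$}, in its $\ell^2$-valued form, and verify at every level $s$ that the scale-localisation gain and the arithmetic decay $2^{-\delta s}$ strictly dominate both the Ionescu--Wainger loss $O_\veps(2^{\veps s})$ and the $s^{1/2}$ coming from the number of active scales; the exponent bookkeeping here, and the extraction of the ``$O(s)$ active scales'' fact from the oscillatory-integral estimates for $\Psi_n$, is where the argument is most delicate. A close second is the continuous oscillation estimate for $\calH_t$ itself: the polynomial lift $(y)^\Gamma$ turns $\Psi_n$ into a genuinely oscillatory integral, so the decay in the degenerate frequency directions requires careful van der Corput/stationary-phase bounds uniform in $\Gamma$, and extracting honest oscillation (rather than a maximal or $r>2$ variational bound) forces one to combine the numerical oscillation inequalities in the spirit of Jones--Seeger--Wright with the Littlewood--Paley control of the Calder\'on--Zygmund error term. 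The short-variation estimate $\norm{\calS f}_{\ell^p}\lesssim\norm{f}_{\ell^p}$, while technical, is comparatively routine.
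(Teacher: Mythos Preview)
Your proposal follows essentially the same architecture as the paper---long/short split, circle-method decomposition via Ionescu--Wainger projections, Weyl bounds on minor arcs, Gauss-sum decay on major arcs, Rademacher--Menshov, and the continuous oscillation inequality for $\calH_t$ as the analytic input---so there is no fundamental gap. The main technical difference is that the paper does \emph{not} split at the standard dyadic points $2^n$ but at the subexponential scales $2^{n^\tau}$ with $\tau<\tfrac12\min\{p_0-1,1\}$, a device imported from \cite{MSZ3}. This matters in two places. First, the short-variation square function over standard dyadic blocks in the discrete Radon setting is not as routine as you suggest: your near-mean-zero argument (lattice sum versus $\int_{\Omega_t\setminus\Omega_s}K=0$) controls only the zeroth Fourier coefficient, not the full $V^2$, and the Rademacher--Menshov step inside a block with $\sim 2^{nk}$ jump points needs more than that; the paper sidesteps this by taking the much shorter blocks $[2^{n^\tau},2^{(n+1)^\tau})$ and citing \cite[Section~3.1]{MSZ3} directly. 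Second, for the major-arc main parts the paper makes explicit the dichotomy you leave implicit: a \emph{small-scale} regime $n\in[S^{1/(\tau u)},2^{\kappa_S+1}]$ handled by Rademacher--Menshov plus Theorem~\ref{thm:IW-mult} (cost $\kappa_S\log S$), and a \emph{large-scale} regime $n\ge 2^{\kappa_S}$ where the $n$-dependent cutoff is frozen to a fixed one (error controlled by \eqref{Phi}) so that Magyar--Stein--Wainger transference reduces matters to the continuous oscillation Theorem~\ref{Thm2} in one stroke. Your description of this step---``$\Psi_n$ active for only $O(s)$ values of $n$, contributing $s^{1/2}$''---is not quite right: the increments of $\Psi_n$ decay geometrically rather than vanishing outside a window, and once transference is applied the continuous oscillation bound is uniform in $n$, not $s^{1/2}$. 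None of this is fatal, but the subexponential scales and the explicit small/large split are what make the exponent bookkeeping you flag as ``the main obstacle'' actually close.
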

\noindent A similar result holds in the continuous case.
\begin{theorem}\label{Thm2}
Let $k\ge 1$ and let consider $\Gamma\subset\N_0^k\setminus\{0\}$ such that $|\Gamma|<\infty$. Then for any $p\in(1,\infty)$ there is a constant $C_{p,k,|\Gamma|}>0$ such that
\begin{equation}
\sup_{N\in\NN}\sup_{I\in\mathfrak{S}_N(\RR_+)}\norm[\big]{O_{I,N}^2(\calH_t f:t\in\RR_+)}_{L^p(\RR^\Gamma)}\le C_{p,k,|\Gamma|}\norm{f}_{L^p(\RR^\Gamma)},\quad f\in L^p(\RR^\Gamma).
\end{equation}
\end{theorem}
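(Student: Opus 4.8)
The plan is to deduce Theorem~\ref{Thm2} from Theorem~\ref{Thm1} by a transference-type argument, or — more likely the route taken in practice — to run the continuous argument in parallel with, but substantially simpler than, the discrete one, since the continuous case requires none of the Ionescu--Wainger machinery. I will describe the direct approach. The first step is the standard reduction to a long-jump/short-jump decomposition: write $t \in [2^n, 2^{n+1})$ and split $\calH_t f - \calH_{2^n} f$ off from the dyadic differences $\calH_{2^n} f - \calH_{I_j} f$. Controlling the oscillation then reduces, via the triangle inequality in $\ell^2_j$, to two pieces: (i) a \emph{long oscillation} $O^2_{J,N}(\calH_{2^n} f : n \in \ZZ)$ along the dyadic sequence, and (ii) the \emph{short variation} $\big(\sum_n V^2([2^n,2^{n+1}) ; \calH_t f)\big)^{1/2}$ where $V^2$ is the $2$-variation over the dyadic block. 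The short-variation term is handled by a square-function estimate: after subtracting the dyadic endpoint, $\calH_t f - \calH_{2^n} f$ is an average against a kernel supported in an annular region $\Omega_t \setminus \Omega_{2^n}$, and using the cancellation condition \eqref{eq:cancel} together with the size \eqref{eq:size-unif} and H\"older \eqref{eq:K-modulus-cont} bounds one gets a Littlewood--Paley-type $L^p$ bound for $\big(\sum_n \sup_{2^n \le t < 2^{n+1}} |\calH_t f - \calH_{2^n} f|^2\big)^{1/2}$; this is classical for Radon transforms (Christ--Nagel--Stein--Wainger type estimates) and is uniform in the choice of $I$ because it dominates the short-variation piece of the oscillation pointwise.

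The heart of the matter is the long oscillation (i) along the dyadic scales, and here I would follow the Campbell--Jones--Reinhold--Wierdl scheme adapted to the Radon setting. The key is a \emph{Fourier-analytic / Littlewood--Paley} splitting of the dyadic kernel. On the Fourier side, $\widehat{\calH_{2^n} f}(\xi) = m_{2^n}(\xi)\widehat f(\xi)$ where $m_t(\xi) = \int_{\Omega_t} e^{2\pi i \xi\cdot (y)^\Gamma} K(y)\,\dif y$; by scaling, $m_{2^n}(\xi)$ depends on $\xi$ only through the anisotropic dilations $(2^{n\gamma}\xi_\gamma : \gamma\in\Gamma)$. One has uniform bounds $|m_t(\xi)| \lesssim \min\{1, |\text{(small-frequency piece)}|^\delta\}$ and $|m_t(\xi) - m_\infty(\xi^{\mathrm{small}})| \lesssim |\text{(large-frequency piece)}|^{-\delta}$ for some $\delta>0$ — i.e. the multiplier is small when any relevant dilated frequency is small and close to the full (non-truncated) multiplier when all relevant dilated frequencies are large. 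Insert a Littlewood--Paley partition $\sum_s \psi_s$ adapted to these anisotropic dilations. For the pieces where the projection frequency and the dyadic scale $2^n$ are far apart, the multiplier bounds give geometric decay, so one can sum in the gap parameter; this reduces matters to finitely-overlapping diagonal blocks, on each of which one applies the \emph{Rademacher--Menshov inequality} to pass from the oscillation seminorm to a sum of $\log$-many vector-valued square functions, each then estimated by an $L^p$ Littlewood--Paley inequality. Summing the geometric series in the off-diagonal parameter and the harmless $\log$ loss against the decay yields the uniform (in $N$ and $I$) bound. Crucially, all constants here are independent of $I \in \mathfrak S_N(\RR_+)$ because the oscillation along a general increasing sequence is, after the above reductions, dominated by the oscillation along the full dyadic sequence plus the short-variation term, both of which we control.

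I expect the main obstacle to be the oscillation/long-jump estimate for the dyadic family on the ``low-frequency'' side, where the multiplier is $O(1)$ rather than decaying: there, one cannot use the smoothness of $K$ to gain decay and must instead exploit that $m_t$ is genuinely \emph{close} to the limiting multiplier $m_\infty$ (restricted to the non-degenerate frequency variables) on that range, so the \emph{differences} $m_{2^n}-m_{2^{n'}}$ are small. Converting this closeness into an oscillation bound — as opposed to a mere boundedness bound — is exactly where the Rademacher--Menshov step, combined with a careful bookkeeping of which frequency blocks interact with which scale ranges, is essential, and keeping the estimate uniform over all $I\in\mathfrak S_N(\RR_+)$ throughout this argument is the delicate point. (In the discrete Theorem~\ref{Thm1}, this same step is further complicated by the arithmetic structure and is where the Ionescu--Wainger multiplier theorem enters; in the present continuous statement that layer is absent, which is why Theorem~\ref{Thm2} is the easier of the two and can be proved by the scheme sketched above.)
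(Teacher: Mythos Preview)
Your long/short decomposition and the handling of the short-variation piece are in line with the paper.  The divergence is in the long-oscillation step, and there the paper takes a genuinely different route from the one you sketch.

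The paper does \emph{not} use a Littlewood--Paley splitting of the multiplier together with Rademacher--Menshov for the long oscillations.  Instead it passes to the complementary operator $\tilde{\calH}_t$ and applies the Duoandikoetxea--Rubio de Francia decomposition
\[
\tilde{\calH}_{D^k}f=\varphi_{D^k}\ast\calH f-\Big(\varphi\ast\sum_{j<0}\mu_{D^j}\Big)_{D^k}\ast f+\Big(\sum_{j\ge0}(\delta_0-\varphi)\ast\mu_{D^j}\Big)_{D^k}\ast f,
\]
where $\mu_{D^j}$ are the annular pieces of the kernel and $\varphi$ is a smooth bump with $\hat\varphi(0)=1$.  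The last two terms are mean-zero objects controlled by square functions.  The first term is the crucial one: it is an \emph{averaging} operator $\varphi_{D^k}\ast g$ applied to the fixed function $g=\calH f$.  Its oscillation in $k$ is handled by approximating $\varphi_{D^k}\ast$ by Christ's dyadic martingale $\EE_k$ (the approximation error is again a square function, Lemma~\ref{martinapprox}) and then invoking the oscillation inequality for martingales from \cite{jkrw} (Theorem~\ref{thm:martinoscilation}).  This is the Jones--Seeger--Wright template, and the only place in the paper where an honest oscillation estimate --- as opposed to a square function --- is used is the martingale result.

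Your plan has a gap at exactly this point.  The Rademacher--Menshov inequality controls $O^2_{I,N}$ of partial sums over a \emph{finite} index range $[1,2^M]$ at the cost of an $M$-fold sum of square functions; in the discrete Theorem~\ref{Thm1} this is applied only on the ``small scales'' block $[S^{1/(\tau u)},2^{\kappa_S+1}]$, and the resulting loss $\kappa_S\log S$ is absorbed by the Gauss-sum decay $S^{-6\varrho}$.  In the continuous setting the dyadic index ranges over all of $\ZZ$ and there is no arithmetic decay to spend.  Your proposed fix --- localize to Littlewood--Paley blocks and use off-diagonal multiplier decay --- does not close: on the side $n\gg s$ the multiplier $m_{2^n}$ is close to the limiting value $\Psi_\infty$, not small, so the differences $m_{2^n}-m_{2^{n'}}$ are small but the \emph{oscillation of the main term} $\Psi_\infty\cdot\hat\varphi(2^{-nA}\xi)$ in $n$ still has to be bounded, and that is precisely the oscillation of an averaging operator over infinitely many scales.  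You are back to needing a genuine (non-square-function) oscillation input, which is what the martingale theorem supplies and what your proposal omits.
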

\noindent It is worth to note that Theorem~\ref{Thm1} does not follow directly from Theorem~\ref{Thm2} since the discrete nature and number-theoretic complications related to the mapping $(x)^\Gamma$ prevents the use of the standard transference arguments.

By a standard lifting argument (see \cite[Lemma 2.1]{MST1} and \cite[Section 11, pp. 483--485]{bigs}) Theorems~\ref{Thm1} and \ref{Thm2} imply more general results. Namely, let $d,k\ge 1$ be fixed natural numbers and consider the mapping
\begin{equation}\label{polymap}
    \mathcal{P}=(\mathcal{P}_1,\dots,\mathcal{P}_{d})\colon\ZZ^k\to\ZZ^{d},
\end{equation}
where each $\calP_j\colon\ZZ^k\to\ZZ$ is a polynomial with integer coefficients such that $\calP_j(0)=0$. For an appropriate test function $f\colon\ZZ^d\to\C$ (or $f\colon\RR^d\to\C$) we define Radon operators associated to the mapping $\mathcal{P}$ by setting
\begin{align}
    H_t^\calP f(x)&:=\sum_{y \in\Omega_{t}\cap\ZZ^{k}\setminus\{0\}} f(x-\calP(y))K(y),\quad x\in\ZZ^d,\label{eq:1}\\
    \calH_t^\calP f(x)&:={\rm p.v.}\int_{\Omega_t}f(x-\calP(y))K(y){\rm d}y,\quad x\in\RR^d\label{continousRadon}.
\end{align}
The above results may be used to easily deduce the following.
\begin{theorem}\label{Thm3}
Let $d, k\ge 1$ and let $\calP$ be a polynomial mapping~\eqref{polymap}. For any $p\in(1,\infty)$ there is a constant $C_{p,d,k,{\rm deg}\calP}>0$ such that
\begin{align}
    \sup_{N\in\NN}\sup_{I\in\mathfrak{S}_N(\RR_+)}\norm[\big]{O_{I,N}^2(H_t^\calP f:t\in\RR_+)}_{\ell^p(\ZZ^d)}&\le C_{p,d,k,{\rm deg} \calP}\norm{f}_{\ell^p(\ZZ^d)},\quad f\in \ell^p(\ZZ^d),\label{eq:3}\\
    \sup_{N\in\NN}\sup_{I\in\mathfrak{S}_N(\RR_+)}\norm[\big]{O_{I,N}^2(\calH_t^\calP f:t\in\RR_+)}_{L^p(\RR^d)}&\le C_{p,d,k,{\rm deg}\calP}\norm{f}_{L^p(\RR^d)},\quad f\in L^p(\RR^d).\label{coscil}
\end{align}
In particular, the implied constants in the inequalities above are independent of the coefficients of the polynomial mapping $\calP$.
\end{theorem}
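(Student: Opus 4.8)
The plan is to deduce \eqref{eq:3} and \eqref{coscil} from Theorems~\ref{Thm1} and~\ref{Thm2} via the standard lifting procedure, following \cite[Lemma~2.1]{MST1} and \cite[Section~11]{bigs}. Put $D:=\deg\calP$ and $\Gamma:=\{\gamma\in\NN_0^k\setminus\{0\}:|\gamma|\le D\}$, so that $|\Gamma|=\binom{k+D}{k}-1$ depends only on $k$ and $D$. Expanding each component as $\calP_j(y)=\sum_{\gamma\in\Gamma}c_{j,\gamma}\,y^\gamma$ with $c_{j,\gamma}\in\ZZ$ exhibits a linear map $L\colon\RR^\Gamma\to\RR^d$, carrying $\ZZ^\Gamma$ into $\ZZ^d$, with $\calP(y)=L\big((y)^\Gamma\big)$. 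Since the sums/p.v.\ integrals defining $H_t^\calP,\calH_t^\calP$ are over $\Omega_t$ and $L$ is linear, one gets the pointwise intertwining identities $H_t(f\circ L)=(H_t^\calP f)\circ L$ and $\calH_t(f\circ L)=(\calH_t^\calP f)\circ L$, where $H_t,\calH_t$ are the operators attached to the canonical mapping $(y)^\Gamma$ on $\ZZ^\Gamma$ (resp.\ $\RR^\Gamma$); hence $O_{I,N}^2(H_t(f\circ L))=O_{I,N}^2(H_t^\calP f)\circ L$, and similarly in the continuous case. The whole task is thus to transfer the bounds of Theorems~\ref{Thm1}--\ref{Thm2} across $L$, the difficulty being that $f\circ L$ is constant along the cosets of $\ker L$ and so does not lie in $\ell^p(\ZZ^\Gamma)$ (resp.\ $L^p(\RR^\Gamma)$).

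I would remove this obstruction by localization, which is the only genuinely technical step. Fix $N\in\NN$ and $I=(I_1<\dots<I_{N+1})\in\mathfrak{S}_N(\RR_+)$; all scales occurring in $O_{I,N}^2$ lie in $[I_1,I_{N+1})$, and for $t<I_{N+1}$ the kernels of $H_t$ and $\calH_t$ are supported in $\Omega_t\subseteq B(0,I_{N+1})$, where $|(y)^\Gamma|\le\rho:=|\Gamma|^{1/2}(1+I_{N+1})^{D}$. Consequently $O_{I,N}^2(H_tg)(z)$ depends only on the values of $g$ on $B(z,\rho)$. Assume first that $L\colon\ZZ^\Gamma\to\ZZ^d$ is onto, and for finitely supported $f$ and $R>\rho$ take the truncated lift $g_R:=(f\circ L)\,\mathds{1}_{B(0,R)}$; by this finite propagation one has $H_tg_R(z)=H_t^\calP f(L(z))$ for all $z\in B(0,R-\rho)\cap\ZZ^\Gamma$ and all such $t$, hence $O_{I,N}^2(H_tg_R)=O_{I,N}^2(H_t^\calP f)\circ L$ on $B(0,R-\rho)\cap\ZZ^\Gamma$. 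Applying Theorem~\ref{Thm1} to $g_R$ and sorting both sums along the fibers of $L$ — each a coset of $\Lambda:=\ker L\cap\ZZ^\Gamma$ — yields
\begin{equation*}
\sum_{u\in\ZZ^d}\big|O_{I,N}^2(H_t^\calP f)(u)\big|^p\,\#\big(L^{-1}(u)\cap B(0,R-\rho)\big)\le C_{p,k,|\Gamma|}^p\sum_{u\in\ZZ^d}|f(u)|^p\,\#\big(L^{-1}(u)\cap B(0,R)\big).
\end{equation*}
Since $O_{I,N}^2(H_t^\calP f)$ is finitely supported, dividing by $\#(\Lambda\cap B(0,R))$ and letting $R\to\infty$ — so that the fiber counts on both sides are asymptotic to $\mathrm{vol}\,B(0,R)/\mathrm{covol}\,\Lambda$ — gives \eqref{eq:3} with constant $C_{p,k,|\Gamma|}$. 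The decisive point is that the coefficients $c_{j,\gamma}$ enter only through $\mathrm{covol}\,\Lambda$, which cancels in the limit; since $|\Gamma|$ is a function of $k$ and $D$, the constant is coefficient-free and of the announced form.

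Finally, two routine loose ends. If $L(\ZZ^\Gamma)=:\Lambda'$ is a proper sublattice of $\ZZ^d$, I would split $\ZZ^d$ into the (finitely many over $\supp f$) cosets of $\Lambda'$: as $\calP(y)\in\Lambda'$ for every $y$, $H_t^\calP$ acts within each coset, which after a group isomorphism $\Lambda'\cong\ZZ^{r}$ with $r\le d$ becomes a Radon operator of the same type possessing a surjective lift, so the previous step applies coset by coset; summing $p$-th powers recovers \eqref{eq:3} in general. For \eqref{coscil} I would proceed identically: split $\RR^d=V\oplus V^\perp$ with $V:=L(\RR^\Gamma)$, note that $\calP$ takes values in $V$ so $\calH_t^\calP$ acts fibrewise over the $V^\perp$-variable and Fubini reduces matters to $V=\RR^d$, then use the truncated lift $g_R:=(f\circ L)\,\mathds{1}_{B(0,R)}$, finite propagation, Theorem~\ref{Thm2}, a linear change of variables turning $L$ into a coordinate projection (constant Jacobian, cancelling in the limit), and let $R\to\infty$ to obtain \eqref{coscil} with the same coefficient-free constant $C_{p,k,|\Gamma|}=C_{p,d,k,D}$. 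I expect the localization/finite-propagation step to be the main thing to get right; everything else is bookkeeping with lattices and fibers.
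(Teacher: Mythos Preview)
Your proposal is correct and takes essentially the same approach as the paper, which does not give a detailed proof of Theorem~\ref{Thm3} but simply remarks that it follows from Theorems~\ref{Thm1} and~\ref{Thm2} ``by a standard lifting argument (see \cite[Lemma~2.1]{MST1} and \cite[Section~11, pp.~483--485]{bigs})''; you have spelled out that lifting. One minor remark: the formulation in those references usually lifts to the product $\ZZ^d\times\ZZ^\Gamma$ (resp.\ $\RR^d\times\RR^\Gamma$) with the polynomial $y\mapsto(\calP(y),(y)^\Gamma)$ and then straightens via the shear $(x,z)\mapsto(x-L(z),z)$, which is always an automorphism and so dispenses with your surjectivity case split---but your direct $f\circ L$ version with fiber averaging is equally valid.
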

\noindent By the properties of the oscillation seminorm inequalities \eqref{eq:3} and \eqref{coscil} implies the maximal estimates
\begin{align*}
    \norm[\big]{\sup_{t>0}|H_t^\calP f|}_{\ell^p(\ZZ^d)}\lesssim\|f\|_{\ell^p(\ZZ^d)} \quad\text{and}\quad \norm[\big]{\sup_{t>0}|\calH_t^\calP f|}_{L^p(\RR^d)}\lesssim\|f\|_{L^p(\RR^d)}.
\end{align*}
Furthermore, from the oscillation inequality one may deduce the pointwise convergence. Namely, the inequality \eqref{coscil} implies that
\begin{equation*}
    \lim_{t\to\infty}\calH_t^\calP f(x) \qquad \text{and}\qquad \lim_{t\to 0}\calH_t^\calP f(x) 
\end{equation*}
exists for almost all $x\in\RR^d$, every $f\in L^p(\RR^d)$ and every $p\in(1,\infty)$.\\
\indent It turns out that the inequality \eqref{eq:3} has an ergodic theoretical interpretation. Let $(X,\calB,\mu)$ be a  $\sigma$-finite measure space endowed with a family of invertible commuting and measure preserving transformations $S_1,\ldots, S_d$. Let us define the following singular operator
\begin{equation}\label{ergodic}
    \calT_t^\calP f(x)=\sum_{y\in\Omega_t\cap\ZZ^k\setminus\{0\}}f(S_1^{\mathcal{P}_1(y)}\circ S_2^{\calP_2(y)}\circ\cdots\circ S_d^{\mathcal{P}_d(y)}x)K(y).
\end{equation}
Then by appealing to the Calderón transference principle  \cite{Cald} we see that by using \eqref{eq:3} one can deduce the following result.
\begin{theorem}
Let $d, k\ge 1$ and let $\calT_t^\calP$ be defined as in \eqref{ergodic}. Then for any $p\in(1,\infty)$ there is a constant $C_{p,d,k,{\rm deg}\calP}>0$ such that
\begin{equation}\label{ergodicosc}
    \sup_{N\in\NN}\sup_{I\in\mathfrak{S}_N(\RR_+)}\norm[\big]{O_{I,N}^2(\calT_t^\calP f:t\in\RR_+)}_{L^p(X)}\le C_{p,d,k,{\rm deg}\calP}\norm{f}_{L^p(X)},\quad f\in L^p(X,\mu).
\end{equation}
The implied constant is independent of the
coefficients of the polynomial mapping $\calP$.
\end{theorem}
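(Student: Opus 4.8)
The plan is to deduce \eqref{ergodicosc} from the discrete oscillation estimate \eqref{eq:3} by means of Calderón's transference principle \cite{Cald}. Fix $p\in(1,\infty)$, an integer $N\in\NN$, and a strictly increasing sequence $I=(I_1<\dots<I_{N+1})\in\mathfrak{S}_N(\RR_+)$; it suffices to establish $\norm{O_{I,N}^2(\calT_t^\calP f:t\in\RR_+)}_{L^p(X)}\le C_{p,d,k,{\rm deg}\calP}\norm{f}_{L^p(X)}$ for these fixed $N$ and $I$, with the constant furnished by \eqref{eq:3}, since then taking the supremum over $N$ and over $I$ yields \eqref{ergodicosc}. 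Two preliminary remarks make the argument clean. First, only truncation parameters $t\in[I_1,I_{N+1})$ enter the seminorm $O_{I,N}^2$, and on each interval $[I_j,I_{j+1})$ the lattice set $\Omega_t\cap\ZZ^k$ is nondecreasing and assumes only finitely many values because $\Omega_{I_{N+1}}$ is bounded; hence each inner supremum in \eqref{eq:45} is a supremum of finitely many measurable functions, so $O_{I,N}^2(\calT_t^\calP f)$ is a well-defined measurable function on $X$. Second, since $\calP(0)=0$ and $\Omega_{I_{N+1}}\cap\ZZ^k\setminus\{0\}$ is finite, there is $L=L(N,I)\in\NN$ with $\calP(\Omega_{I_{N+1}}\cap\ZZ^k\setminus\{0\})\subseteq Q_L$, where $Q_M:=\{-M,\dots,M\}^d$.

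As the transformations $S_1,\dots,S_d$ commute and are invertible, $n=(n_1,\dots,n_d)\mapsto S^n:=S_1^{n_1}\circ\cdots\circ S_d^{n_d}$ is a measure-preserving $\ZZ^d$-action on $X$ with $S^{a+b}=S^a\circ S^b$. Fix $R\in\NN$ and, for $x\in X$, set $g_{x,R}\colon\ZZ^d\to\CC$, $g_{x,R}(n):=f(S^{-n}x)\,\ind{Q_{R+L}}(n)$. For $\mu$-a.e.\ $x$ this function is finitely supported, hence lies in $\ell^p(\ZZ^d)$, with $\sum_{n\in\ZZ^d}\abs{g_{x,R}(n)}^p=\sum_{n\in Q_{R+L}}\abs{f(S^{-n}x)}^p$. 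If $n\in Q_R$ and $t\in[I_1,I_{N+1})$, then every $y\in\Omega_t\cap\ZZ^k\setminus\{0\}$ satisfies $\calP(y)\in Q_L$, so $n-\calP(y)\in Q_{R+L}$ and
\begin{equation*}
H_t^\calP g_{x,R}(n)=\sum_{y\in\Omega_t\cap\ZZ^k\setminus\{0\}}f\bigl(S^{-(n-\calP(y))}x\bigr)K(y)=\sum_{y\in\Omega_t\cap\ZZ^k\setminus\{0\}}f\bigl(S^{\calP(y)}S^{-n}x\bigr)K(y)=(\calT_t^\calP f)(S^{-n}x).
\end{equation*}
Since this holds for every $t$ relevant to $O_{I,N}^2$, we obtain $O_{I,N}^2(H_t^\calP g_{x,R}:t\in\RR_+)(n)=O_{I,N}^2(\calT_t^\calP f:t\in\RR_+)(S^{-n}x)$ for all $n\in Q_R$.

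Now apply \eqref{eq:3} to $g_{x,R}$ and keep only the indices $n\in Q_R$ on the left; using nonnegativity of the oscillation seminorm and the identity above, this gives, for $\mu$-a.e.\ $x\in X$,
\begin{equation*}
\sum_{n\in Q_R}\abs[\big]{O_{I,N}^2(\calT_t^\calP f)(S^{-n}x)}^p\le C_{p,d,k,{\rm deg}\calP}^p\sum_{n\in Q_{R+L}}\abs{f(S^{-n}x)}^p.
\end{equation*}
Integrating over $x$ with respect to $\mu$, interchanging sum and integral by Tonelli's theorem, and using that each $S^{-n}$ preserves $\mu$, the left side equals $\card{Q_R}\,\norm{O_{I,N}^2(\calT_t^\calP f)}_{L^p(X)}^p$ and the right side equals $C_{p,d,k,{\rm deg}\calP}^p\,\card{Q_{R+L}}\,\norm{f}_{L^p(X)}^p$. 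Dividing by $\card{Q_R}=(2R+1)^d$ and letting $R\to\infty$ with $L,N,I$ fixed (so that $\card{Q_{R+L}}/\card{Q_R}\to1$) yields $\norm{O_{I,N}^2(\calT_t^\calP f)}_{L^p(X)}\le C_{p,d,k,{\rm deg}\calP}\norm{f}_{L^p(X)}$, and taking the supremum over $N$ and $I$ finishes the proof; the constant is inherited from \eqref{eq:3}, hence independent of the coefficients of $\calP$. There is no serious obstacle here: all the hard analysis sits in \eqref{eq:3}, and the only points needing a little attention are the reduction of the inner suprema to finite ones (so that measurability holds and the pointwise identity is literally true) and the observation that $L$ depends on $(N,I)$ alone, which is what lets the ratio $\card{Q_{R+L}}/\card{Q_R}$ tend to $1$.
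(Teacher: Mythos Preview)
Your argument is correct and follows precisely the route indicated by the paper, which simply invokes Calder\'on's transference principle \cite{Cald} to pass from \eqref{eq:3} to \eqref{ergodicosc}; you have written out that transference in full detail. The only points of care --- measurability of the oscillation seminorm via the reduction to finitely many truncation values, and the dependence of $L$ on $(N,I)$ alone so that $\card{Q_{R+L}}/\card{Q_R}\to1$ --- are handled cleanly.
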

\noindent Again, the inequality \eqref{ergodicosc} implies the maximal estimate
\begin{equation*}
    \norm[\big]{\sup_{t>0}|\calT_t^\calP f|}_{L^p(X)}\lesssim \|f\|_{L^p(X)}
\end{equation*}
and pointwise convergence, that is
\begin{equation*}
    \lim_{t\to\infty} \calT_t^\calP(x)
\end{equation*}
exists $\mu$-almost everywhere on $X$ for every $f\in L^p(X,\mu)$ and $p\in(1,\infty)$.
\subsection{Historical background}
\indent
The systematic study of singular integrals began with the Hilbert transform, which is given by
\begin{equation*}
    \calH f(x):=\frac{1}{\pi}{\rm p.v.}\int_{\RR}\frac{f(x-y)}{y}{\rm d}y,\quad x\in\RR.
\end{equation*}
The first questions concerning the operator $\calH$ were about it boundedness on $L^p(\RR)$ with $p\in(1,\infty)$. First answer to that question was given by M. Riesz in mid 1920's. His approach relied heavily on some properties of analytic functions. In early 1950's Calderón and Zygmund in their groundbreaking paper \cite{CZ} developed a real-variable method which allowed to study singular integrals in higher dimensions and resulted in introducing Calderón--Zygmund operators of the form 
\begin{equation*}
    T f(x):={\rm p.v.}\int_{\RR^d}f(x-y)K(y){\rm d}y,\quad x\in\RR^d,
\end{equation*}
where $K\colon\RR^{d}\setminus\{0\} \to \RR$ is called a Calderón--Zygmund kernel.

An important tool in studying the singular integrals is their truncated form and its convergence. Namely, for $t>0$ we define the truncated Hilbert transform
\begin{equation}
    \calH_tf(x):={\rm p.v.}\frac{1}{\pi}\int_{|y|<t}\frac{f(x-y)}{y}{\rm d}y,\quad x\in\RR
\end{equation}
as well as the truncated Calderón--Zygmund operator
\begin{equation*}
    T_t^{\rm ball}f(x):={\rm p.v.}\int_{|y|<t}f(x-y)K(y){\rm d}y,\quad x\in\RR^d.
\end{equation*}
As is well known, one of the most important steps in proving convergence of a family of operators is the boundedness on $L^p$ of maximal functions associated with it. In our case, this means showing the following inequalities
\begin{equation*}
    \norm[\big]{\sup_{t>0}|\calH_tf|}_{L^p(\RR)}\lesssim_p \|f\|_{L^p(\RR)}\quad\text{and}\quad  \norm[\big]{\sup_{t>0}|T_t^{\rm ball}f|}_{L^p(\RR^d)}\lesssim_{p,d} \|f\|_{L^p(\RR^d)}
\end{equation*}
hold for every $f\in L^p$. It is known that the above inequalities hold for $p\in(1,\infty)$.

It turns out that the question about convergence and the boundedness of the maximal function can be reduced to study only the oscillation inequality. The oscillation seminorm \eqref{eq:45} arises in the work of Bourgain \cite{B1,B2,B3} on the convergence almost everywhere of the ergodic averages along polynomials. The connection established by Bourgain between the convergence and the oscillation seminorm resulted in investigation of the oscillation inequalities (and other seminorm inequalities) for various operators, see \cite{jkrw,JR1,JRW,JRW2,MSS} and the references given there. In particular, motivated by Bourgain's work, Campbell, Jones, Reinhold and Wierdl \cite[Theorem 1.1, p. 59]{CJRW1} investigated oscillation inequalities for the truncated Hilbert transform. They proved that for any $p\in(1,\infty)$ there is a constant $C_p>0$ such that
\begin{equation}\label{hist2}
    \sup_{N\in\NN}\sup_{I\in\mathfrak{S}_N(\RR_+)}\norm[\big]{O_{I,N}^2(\calH_t f:t\in\RR_+)}_{L^p(\RR)}\le C_{p}\norm{f}_{L^p(\RR)},\quad f\in L^p(\RR).
\end{equation}
In \cite[Theorem A, p. 2116]{CJRW2} the authors managed to extend the above result to the case of multidimensional singular integrals of the Calderón--Zygmund type. Namely, for every $p\in(1,\infty)$, there is a constant $C_{p,d}>0$ such that
\begin{equation}\label{hist1}
    \sup_{N\in\NN}\sup_{I\in\mathfrak{S}_N(\RR_+)}\norm[\big]{O_{I,N}^2(T_t^{\rm ball} f:t\in\RR_+)}_{L^p(\RR^d)}\le C_{p,d}\norm{f}_{L^p(\RR^d)},\quad f\in L^p(\RR^d).
\end{equation}
In the proof of the inequality~\eqref{hist1} the authors used the symmetry of standard Euclidean ball and the fact that the operator $T_t^{\rm ball}$ is related to the group of dilations given by
\begin{equation*}
    \delta_t(x):=(tx_1,tx_2,\ldots,tx_d),\quad x\in\RR^d
\end{equation*}
to reduce matters to the inequality~\eqref{hist2}. In order to prove \eqref{hist2} they approximated $\calH_t$ by the standard dyadic martingale and used the fact that the oscillation inequality for martingales is known.

\indent The operator $\calH_t$ defined in \eqref{histRadoncon1} is a truncated form of the full continuous singular Radon transform given by
\begin{equation*}
    \calH f(x)={\rm p.v.}\int_{\RR^k}f(x-(y)^\Gamma)K(y){\rm d}y.
\end{equation*}
It is well known that the operator $\calH$ is bounded on $L^p(\RR^d)$ with $p\in(1,\infty)$. The singular Radon transforms can be seen as a natural extension of the Calderón--Zygmund theory of singular integrals. It originates in some problems related to curvatures and parabolic differential equations, see \cite{CNSW,IMSW,SW3,SW1}. Therefore, the inequality~\eqref{coscil} can be seen as a generalisation of the result of Campbell, Jones, Reinhold and Wierdl to the context of singular integrals of the Calderón--Zygmund type associated with the polynomial mappings and convex sets.

\indent The operator $\calH$, as well as it truncated form \eqref{histRadoncon1}, is related to the following group of dilations 
\begin{equation}\label{hist:dila}
    \delta_t(x):=\big(t^{|\gamma|}x_{\gamma}: \gamma\in \Gamma\big),\quad x\in\RR^\Gamma.
\end{equation}
As one can see those dilations acts different on the each coordinate. This fact prevents from reducing matters to one dimensional case. Fortunately, Jones, Seeger and Wright \cite{jsw} in their study of jumps inequalities and $r$-variations managed to overcome this obstacle. Instead of reducing to the one-dimensional case and then approximating by martingales they went straight to approximation by martingales, but this time associated to Christ's dyadic cubes (which are related to the group of dilations~\eqref{hist:dila}). We follow that argument in our proof of Theorem~\ref{Thm2}.

\indent The operator $H_t^\calP$ is a truncated form of the full discrete singular Radon transform given by
\begin{equation}\label{eq:H1}
    H^\calP f(x):=\sum_{y\in\ZZ^k\setminus\{0\}}f(x-\calP(y))K(y).
\end{equation}
The operator $H^\calP$ is a discrete counterpart of the continuous singular Radon transform $\calH^\calP$. It was a challenging problem to establish boundedness of $H^\calP$ on $\ell^p(\ZZ^d)$ with $p\in(1,\infty)$. The first partial answer was given by Stein and Wainger in \cite{SW1} where they managed to prove that $H^\calP$ is bounded on $\ell^p(\ZZ^d)$ for $p$ in a certain neighbourhood of $2$. The full range of $p\in(1,\infty)$ were obtained by Ionescu and Wainger \cite{IW}, see also \cite{M10} for a different approach.

One needs to be careful while dealing with discrete analogues of the continuous operators. Some results can be easily transfered form the continuous to the discrete setting. For example, the $\ell^p$-bound for the discrete version of the usual Calderón--Zygmund operator can be easily deduced from the bound of its continuous counterpart on $L^p$. Unfortunately, this kind of approach is impossible in the case of Radon operators, since (roughly) the polynomial $(x)^\Gamma$ may have an unbounded gaps. Also the behaviour of the discrete objects may be totally different then their continuous cousins. In order to illustrate these differences let us focus our attention on the one dimensional case and let consider $\Omega=(-1,1)$, $(x)^\Gamma=x^3$ and $K(y)=y^{-1}$. Then the Fourier multiplier associated with the operator $H_t$ is of the form
\begin{equation*}
    m_t(\xi)=\sum_{y\in(-t,t)\cap\ZZ\setminus\{0\}}e(\xi y^3)y^{-1},
\end{equation*}
while in the case of $\calH_t$ the multiplier is given by
\begin{equation*}
    \psi_t(\xi)={\rm p.v.}\int_{-t}^te(\xi y^3)y^{-1}{\rm d}y.
\end{equation*}
It can be easily shown that
\begin{equation*}
    \psi_t(\xi)=i\frac{2}{3}{\rm sgn}(\xi){\rm Si}(2\pi \xi t^3), 
\end{equation*}
where ${\rm Si}(z)$ is the Sine Integral. Meanwhile in the discrete case we have $m_t(\xi)=0$ when $\xi\in\ZZ$. As we can notice the sum and the integral demonstrate completely different behaviours and consequently we need to use different approach to study discrete analogues of Radon operators. 

\indent The oscillation inequality \eqref{eq:3} for the discrete Radon transform seems to be new, even in the case $k=d=1$. In the proof of Theorem~\ref{Thm1} we exploit the technique based on subexponential sequences and the Rademacher--Menshov inequality which was introduced in \cite{MSZ3}, see also \cite{MST2}. Moreover, we make use of the vector-valued Ionescu--Wainger multiplier theorem and the square function estimates from \cite{MSZ2}.

\indent At the end, it is worth to mention that similar seminorm inequalities, like variational estimates or the jump inequality, for both $H_t^\calP$ and $\calH^\calP_t$, are already known, see \cite{jsw,MST2,MSZ2,MSZ3}. However, those known estimates do not imply our results.
\section{Preliminaries}\label{sec:notation}
\subsection{Basic notation}
We denote $\NN:=\{1, 2, \ldots\}$ and $\NN_0:=\{0,1,2,\ldots\}$. For
$d\in\NN$ the sets $\ZZ^d$, $\RR^d$, $\CC^d$ and
$\TT^d:=\RR^d/\ZZ^d$ have the standard meaning. We denote $\RR_+:=(0, \infty)$ and
for every $N\in\NN$ we  define
    \[
    \NN_N:=\{1,\ldots, N\}\text{ and } \NN_\infty:=\NN
    \]
For $\tau\in(0, 1)$ and $u\in\NN$ we define sets
    \begin{align*}
        \DD_{\tau}:=\{n^\tau:n\in\NN\}\quad\text{ and }\quad 2^{u\NN}:=\{2^{un}\colon n\in\NN\}.
    \end{align*}
We write $A \lesssim B$ to indicate that $A\le CB$ with a constant $C>0$. The constant $C$ may vary from line to line. We write $\lesssim_{\delta}$ if the implicit constant depends on $\delta$. 

For $N\in\NN\cup\{\infty\}$ we denote by $\mathfrak S_N(\II)$ the family of all strictly increasing
sequences of length $N+1$ contained in $\II$.

\subsection{Norms and Fourier transform} The standard inner product and the corresponding Euclidean norm on $\RR^d$ are denoted by $ x\cdot\xi$ and $\abs{x}:=\abs{x}_2$, respectively. Furthermore, we need the maximum norm on $\RR^d$ defined as $$|x|_{\infty}:=\max_{1\leq k\leq d}|x_k|.$$

For any $\gamma=(\gamma_1,\dots,\gamma_k)\in\N^k$ we abuse the notation by writing $|\gamma|:=\gamma_1+\cdots+\gamma_k$. However, this should never cause any confusions
since the multi-indices will be always denoted by Greek letter $\gamma$.

Let $\GG=\RR^d$ or $\GG=\ZZ^d$. By the symbol $\GG^*$ we denote the dual group of $\GG$. The Fourier transform $\calF_{\GG}$ of any $f \in L^1(\GG)$ is defined by
\begin{align*}
\calF_{\GG} f(\xi) := \int_{\GG} f(x) \ex(x\cdot\xi) {\rm d}\mu(x),\quad \xi\in\GG^*,
\end{align*}
where $\mu$ is the Haar measure on $\GG$ and $\ex(\xi):=e^{2\pi {\bm i} \xi}$ with ${\bm i}^2=-1$. The Fourier multiplier operator associated with the bounded function $\mathfrak m\colon\GG^*\to\CC$ is given by
\begin{align}
\label{eq:2}
T_{\GG}[\mathfrak m]f(x):=\int_{\GG^*}\ex(-\xi\cdot x)\mathfrak m(\xi)\calF_{\GG}f(\xi){\rm d}\xi, \quad x\in\GG.
\end{align}
Here, we assume that $f\colon\GG\to\CC$ is a compactly supported function on $\GG$ (and smooth if $\GG=\RR^d$) or any other function for which \eqref{eq:2} makes sense.
\subsection{Ionescu--Wainger multiplier theorem}\label{sec:IW}
In order to handle the discrete singular integral we make use of the Ionescu--Wainger multiplier theorem.
\begin{theorem}\label{thm:IW-mult}
For every $\varrho>0$, there exists a family $(P_{\leq N})_{N\in\NN}$ of subsets of $\NN$ such that:  
\begin{enumerate}[label*={(\roman*)}]
\item \label{IW1} One has $\NN_N\subseteq P_{\leq N}\subseteq\N_{\max\{N, e^{N^{\varrho}}\}}$.
\item \label{IW2}  If $N_1\le N_2$, then $P_{\leq N_1}\subseteq P_{\leq N_2}$.
\item \label{IW3}  If $q \in P_{\leq N}$, then all factors of $q$ also lie in $P_{\leq N}$.
\end{enumerate}

Furthermore, for every $p \in (1,\infty)$, there exists  $0<C_{p, \varrho, d}<\infty$ such that, for every $N\in\NN$, the following holds.

Let $0<\varepsilon_N \le e^{-N^{2\varrho}}$, and let $\Theta\colon\RR^{d} \to L(H_0,H_1)$ be a measurable function supported on $\varepsilon_{N}\mathbf Q$, where $\mathbf Q:=[-1/2, 1/2)^d$ is a unit cube, with values in the space $L(H_{0},H_{1})$ of bounded linear operators between separable Hilbert spaces $H_{0}$ and $H_{1}$.
Let $0 \leq \mathbf A_{p} \leq \infty$ denote the smallest constant such that, for every function $f\in L^2(\RR^d;H_0)\cap L^{p}(\RR^d;H_0)$, we have
\begin{align}
\label{eq:75}
\norm[\big]{T_{\RR^d}[\Theta]f}_{L^{p}(\RR^{d};H_1)}
\leq
\mathbf A_{p} \norm{f}_{L^{p}(\RR^{d};H_0)}.
\end{align}
Then the multiplier
\begin{equation}
\label{eq:IW-mult}
\Delta_N(\xi)
:=\sum_{b \in\Sigma_{\leq N}}
\Theta(\xi - b),
\end{equation}
where $\Sigma_{\leq N}$ is defined by
\begin{align}
\label{eq:42}
\Sigma_{\leq N} := \Big\{ \frac{a}{q}\in\QQ^d\cap\TT^d:  q \in P_{\leq N}\text{ and } {\rm gcd}(a, q)=1\Big\},
\end{align}
satisfies for every $f\in L^p(\ZZ^d;H_0)$ the following inequality
\begin{align}
\label{eq:76}
\norm[\big]{ T_{\ZZ^d}[\Delta_{N}]f}_{\ell^p(\ZZ^{d};H_1)}
\le C_{p,\varrho,d}
(\log N) \mathbf A_{p}
\norm{f}_{\ell^p(\ZZ^{d};H_0)}.
\end{align}
\end{theorem}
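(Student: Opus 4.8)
The plan is to reproduce the argument of Ionescu and Wainger \cite{IW}, in the streamlined form of Mirek, Stein and Zorin-Kranich \cite{MSZ2} (see also \cite{M10}); passing from the scalar case to the operator-valued target $L(H_0,H_1)$ costs nothing, because disjointness of supports, the sampling principle and interpolation are all insensitive to a separable Hilbert target. First I would fix $\varrho>0$ and build the sets $P_{\leq N}$ as in \cite{IW}: one partitions the primes into consecutive blocks of rapidly increasing size and declares $P_{\leq N}$ to consist of all products of prime powers whose underlying primes are drawn from a bounded number $l_0=l_0(\varrho)$ of these blocks, subject to size restrictions forcing every element to be at most $e^{N^{\varrho}}$. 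The nesting \ref{IW2} and the divisor-closure \ref{IW3} are then immediate, while the inclusion \ref{IW1} is a short check on the prime factorisation of $n\le N$. This simultaneously exhibits $\Sigma_{\leq N}$ from \eqref{eq:42} as a disjoint union, over the block indices, of elementary families of fractions, and fixes the $O(\log N)$ levels along which $\Delta_N$ will be decomposed.

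Next I would use the smallness hypothesis $\varepsilon_N\le e^{-N^{2\varrho}}$. With $Q:=\max P_{\leq N}\le e^{N^{\varrho}}$, two distinct elements of $\Sigma_{\leq N}$ are separated by at least $Q^{-2}\gtrsim e^{-2N^{\varrho}}$, which for large $N$ exceeds $2\varepsilon_N$; hence the translates $\Theta(\,\cdot\,-b)$, $b\in\Sigma_{\leq N}$, have pairwise disjoint closed supports, so $\Delta_N$ is a genuine periodisation and $T_{\ZZ^d}[\Delta_N]$ decouples into single-denominator pieces. For a fixed denominator $q$ the fractions $a/q$ with $\gcd(a,q)=1$ are organised, via the Chinese Remainder Theorem and classical Gauss-sum identities, by the Fourier analysis of $\ZZ/q\ZZ$, and the sampling principle of Magyar, Stein and Wainger converts the continuous bound \eqref{eq:75} into an $\ell^p(\ZZ^d;H_1)$ bound for that piece, with constant $O(\mathbf A_p)$ and no loss in $N$.

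At the endpoint $p=2$, Plancherel together with the disjointness above gives $\norm{T_{\ZZ^d}[\Delta_N]}_{\ell^2(\ZZ^d;H_0)\to\ell^2(\ZZ^d;H_1)}\lesssim\mathbf A_2$ with no dependence on $N$: the periodisation is orthogonal, so one does not pay for the number of residues. For a general exponent I would split $\Delta_N$ into the $O(\log N)$ levels furnished by the construction and, on each level, combine an $\ell^2$ square-function estimate for the periodisation (in the spirit of \cite{MSZ2}) with the Ionescu--Wainger interpolation argument, which upgrades the loss-free $L^2$ bound to an $O(\mathbf A_p)$ bound on $\ell^p$ with no power of $N$. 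Summing the $O(\log N)$ levels by the triangle inequality then produces \eqref{eq:76}.

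The main obstacle is not any single estimate but the bookkeeping that turns the naive polynomial-in-$N$ losses into the stated single factor $\log N$: one must split $\Sigma_{\leq N}$ exactly along the block structure of $P_{\leq N}$, run the sampling principle level by level, and calibrate the interpolation so that the constant stays uniformly bounded across all $O(\log N)$ levels; this interpolation step is the technical heart of \cite{IW}. The hypothesis $\varepsilon_N\le e^{-N^{2\varrho}}$ is precisely what makes the whole scheme lossless, by ruling out any interaction between the periodised bumps; relaxing it would reintroduce cross terms that the $\log N$ budget cannot absorb.
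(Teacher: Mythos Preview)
The paper does not prove this theorem at all: immediately after the statement it simply writes ``The reader can found the proof of Theorem~\ref{thm:IW-mult} in \cite[Section 2]{MSZ3},'' and also points to Tao \cite{TaoIW} for the sharper version without the $\log N$. Your sketch is a faithful outline of precisely that Ionescu--Wainger argument (block decomposition of $P_{\leq N}$, disjointness from the hypothesis $\varepsilon_N\le e^{-N^{2\varrho}}$, Magyar--Stein--Wainger sampling, Plancherel at $p=2$, and the level-by-level interpolation that confines the loss to a single $\log N$), so you are in complete agreement with the reference the paper defers to; the only cosmetic discrepancy is that you cite \cite{MSZ2} where the paper points to \cite[Section 2]{MSZ3} for the formulation used here.
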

The reader can found the proof of Theorem~\ref{thm:IW-mult} in \cite[Section 2]{MSZ3}. We also refer to
the paper of Tao \cite{TaoIW}, where he was able to remove the factor $\log N$ from \eqref{eq:76}.
\section{Proof of Theorem~\ref{Thm1} for the canonical mapping $(x)^\Gamma$}
Assume that $p\in(1,\infty)$ and let $f\in \ell^p(\ZZ^\Gamma)$ be a function with a compact support. By using the monotone convergence theorem and standard density arguments to prove \eqref{oscileqcanon} it is enough to establish
\begin{equation}\label{eq:40} 
    \sup_{N\in\NN}\sup_{I\in\mathfrak{S}_N(\II)}\norm{O_{I,N}^2(H_tf: t\in\II)}_{\ell^p(\ZZ^\Gamma)}\le C_{p,k,|\Gamma|}\norm{f}_{\ell^p(\ZZ^\Gamma)}
\end{equation}
for every finite subset $\II\subset\RR_+$ with a constant $C_{p,k,|\Gamma|}> 0$ that is independent of the set $\II$. Let us choose $p_0\in(1,2)$, close to $1$ such that $p\in(p_0,p_0')$. Then we take $\tau\in(0,1)$ such that
\begin{equation}\label{eq:5}
    \tau<\frac{1}{2}\min\{p_0-1,1\}.
\end{equation}
We split \eqref{eq:40} (cf. \cite[Lemma 1.3]{jsw}) into long oscillations and short variations
\begin{equation}\label{eq:4}
\begin{aligned}
   \sup_{N\in\NN}\sup_{I\in\mathfrak{S}_N(\II)}\norm{O_{I,N}^2(H_tf: t\in\II)}_{\ell^p(\ZZ^\Gamma)}\lesssim&\sup_{N\in\NN}\sup_{I\in\mathfrak{S}_N(\DD_\tau)}\norm[\big]{O_{I,N}^2(H_{2^{n^\tau}}f:n\in\NN_0)}_{\ell^p(\ZZ^\Gamma)}\\
    &+\norm[\Big]{\Big(\sum_{n=0}^\infty V^2\big(H_{t}f:t\in[2^{n^\tau},2^{(n+1)^\tau})\cap\II\big)^2\Big)^{1/2}}_{\ell^p(\ZZ^\Gamma)}.
\end{aligned}
\end{equation}
The short variations were estimated in \cite[Section 3.1]{MSZ3}. Hence we are reduced to prove only the estimate for long oscillations.
\subsection{Estimates for long oscillations}
Our aim is to establish the following inequality
\begin{equation}\label{eq:49}
    \sup_{N\in\NN}\sup_{I\in\mathfrak{S}_N(\DD_\tau)}\norm[\big]{O_{I,N}^2(H_{2^{n^\tau}}f:n\in\NN_0)}_{\ell^p(\ZZ^\Gamma)}\lesssim\|f\|_{\ell^p(\ZZ^\Gamma)}.
\end{equation}
Let us observe that for any $x\in\ZZ^\Gamma$ one has $H_{2^{n^\tau}}f(x)=T_{\ZZ^\Gamma}[m_{{n^\tau}}]f(x),$ where
\begin{equation*}
    m_{n^\tau}(\xi)=\sum_{y \in\Omega_{2^{n^\tau}}\cap\ZZ^{k}\setminus\{0\}}e(\xi\cdot (y)^\Gamma)K(y),\quad\xi\in\TT^\Gamma.
\end{equation*}
\indent Now, let $\chi\in(0,1/10)$. The proof of \eqref{eq:49} will require several appropriately chosen parameters. Choose $\alpha>0$ such that
\begin{equation*}
    \alpha>10\left(\frac{1}{p_0}-\frac{1}{2}\right)\left(\frac{1}{p_0}-\frac{1}{\min\{p,p'\}}\right)^{-1}.
\end{equation*}
Let $u\in\NN$ be a large natural number which will be specified later. We set
\begin{equation}
    \varrho:=\min\left\{\frac{1}{10u},\frac{\delta}{8\alpha}\right\},
\end{equation}
where $\delta>0$ is from the estimate for the Gauss sum \eqref{gaussum}. Now, let us consider $S_0=\max\{t>0:t\in 2^{u\NN}\cap[1, n^{\tau u}]\}$. We recall the family of rational fractions $\Sigma_{\leq S_0}$ related to parameter $\varrho$, defined in Section~\ref{sec:IW}. For simplicity we will write
\begin{equation*}
    \Sigma_{\le n^{\tau u}}:=\Sigma_{\leq S_0}.
\end{equation*}
Next, for dyadic integers $S\in2^{u\NN}$ we define
\begin{equation*}
    \Sigma_S=\begin{cases}
    \Sigma_{\leq S},& {\rm if}\quad S=2^u,\\
    \Sigma_{\leq S}\setminus\Sigma_{\leq S/2^u},& {\rm if}\quad S>2^u.
    \end{cases}
\end{equation*}
It is easy to see that
\begin{equation}\label{eq:52}
     \Sigma_{\le n^{\tau u}}=\bigcup_{\substack{S\le n^{\tau u},\\S\in 2^{u\NN}}}\Sigma_S.
\end{equation}
Now, we define the Ionescu--Wainger projection multipliers. For this purpose, we introduce a diagonal matrix $A$ of size $|\Gamma| \times |\Gamma|$ given by the condition $(A v)_\gamma := \abs{\gamma} v_\gamma$ for any $\gamma \in \Gamma$ and $v\in\RR^\Gamma$. Moreover, for any $t > 0$ we also define
corresponding dilations by setting $t^A x=\big(t^{|\gamma|}x_{\gamma}: \gamma\in \Gamma\big)$ for
every $x\in\RR^\Gamma$. Let $\eta\colon\RR^\Gamma\to[0,1]$ be a smooth function such that
\begin{equation*}
    \eta(x)=\begin{cases}
    1, & |x|\le1/(32|\Gamma|), \\
    0, & |x|\ge1/(16|\Gamma|).\end{cases}
\end{equation*}
For any $n\in\NN$ we set
\begin{equation}\label{eq:50}
    \Pi_{\le n^\tau}(\xi):=\sum_{a/q\in\Sigma_{\le n^{\tau u}}}\eta\big(2^{n^\tau(A-\chi I)}(\xi-a/q)\big),\quad \xi\in\TT^\Gamma.
\end{equation}
It is easy to check that Theorem~\ref{thm:IW-mult} holds for the multiplier $\Pi_{\le n^\tau}$ since one has $2^{-n^\tau(|\gamma|-\chi)}\le e^{-n^{\tau/10}}\le e^{-S_0^\varrho}$ for sufficiently large $n\in\NN$.\\
\indent Using the functions defined in \eqref{eq:50} we can partition the multiplier $m_{{n^\tau}}$ and estimate the left hand side of \eqref{eq:49} by
\begin{align}
    &\sup_{N\in\NN}\sup_{I\in\mathfrak{S}_N(\DD_\tau)}\norm[\Big]{O_{I,N}^2\Big(T_{\ZZ^\Gamma}\big[\sum_{j=1}^n(m_{{j^\tau}}-m_{(j-1)^\tau})\Pi_{\le j^\tau}\big]f:n\in\NN_0\Big)}_{\ell^p(\ZZ^\Gamma)}\label{eq:12}\\
    &+\sup_{N\in\NN}\sup_{I\in\mathfrak{S}_N(\DD_\tau)}\norm[\Big]{O_{I,N}^2\Big(T_{\ZZ^\Gamma}\big[\sum_{j=1}^n(m_{{j^\tau}}-m_{(j-1)^\tau})(1-\Pi_{\le j^\tau})\big]f:n\in\NN_0\Big)}_{\ell^p(\ZZ^\Gamma)}\label{eq:11}.
\end{align}
Here we use convection that for $n=0$ the sum is equal to $0$. The expressions in \eqref{eq:12} and \eqref{eq:11} correspond to major and minor arcs from the Hardy--Littlewood circle method, respectively.
\subsubsection{Minor arcs}
First, we note that the oscillation seminorm \eqref{eq:11} is controlled by 1-variation $V^1$ and by the following estimate
\begin{align*}
    \norm[\Big]{V^1\big(T_{\ZZ^\Gamma}\big[\sum_{j=1}^n(m_{{j^\tau}}-m_{(j-1)^\tau})&(1-\Pi_{\le j^\tau})\big]f:n\in\NN_0\big)}_{\ell^p(\ZZ^\Gamma)}\\
    &\lesssim\sum_{n=0}^\infty\norm{T_{\ZZ^\Gamma}[(m_{{(n+1)^\tau}}-m_{n^\tau})(1-\Pi_{\le (n+1)^\tau})]f}_{\ell^p(\ZZ^\Gamma)}.
\end{align*}
Consequently, it is enough to show
\begin{equation}\label{eq:9}
    \|T_{\ZZ^\Gamma}[(m_{{(n+1)^\tau}}-m_{n^\tau})(1-\Pi_{\le (n+1)^\tau})]f\|_{\ell^p(\ZZ^\Gamma)}\lesssim(n+1)^{-2}\|f\|_{\ell^p(\ZZ^\Gamma)}.
\end{equation}
To prove the above estimate we use Weyl's inequality \cite[Theorem A.1, p. 49]{MSZ3} and proceed as in \cite[Lemma 3.29, p. 34]{MSZ3} (provided that $u\in\NN$ is large enough).
\subsubsection{Major arcs and multiplier approximation}
Now we need to prove that the estimate
\begin{align}\label{eq:44}
   \sup_{N\in\NN}\sup_{I\in\mathfrak{S}_N(\DD_\tau)}\norm[\Big]{O_{I,N}^2\Big(T_{\ZZ^\Gamma}\big[\sum_{j=1}^n(m_{{j^\tau}}-m_{(j-1)^\tau})\Pi_{\le j^\tau}\big]f:n\in\NN_0\Big)}_{\ell^p(\ZZ^\Gamma)}\lesssim\norm{f}_{\ell^p(\ZZ^\Gamma)}
\end{align}
holds. At first we show that the multiplier
\begin{equation*}
    \sum_{a/q\in\Sigma_{\le j^{\tau u}}}\big(m_{j^\tau}-m_{(j-1)^\tau}\big)(\xi)\eta(2^{j^\tau(A-\chi I)}(\xi-a/q)).
\end{equation*}
is, up to an acceptable error term, equal to
\begin{equation}\label{eq:38}
    {\bm m}_j(\xi)=\sum_{a/q\in\Sigma_{\le j^{\tau u}}}G(a/q)\big(\Psi_{2^{j^\tau}}-\Psi_{2^{(j-1)^\tau}}\big)(\xi-a/q)\eta(2^{j^\tau(A-\chi I)}(\xi-a/q))
\end{equation}
where $\Psi_{t}$ is a continuous version of the multiplier $m_{t}$ given by
\begin{align*}
    \Psi_t(\xi)&:={\rm p.v.}\int_{\Omega_{t}}{e(\xi\cdot(y)^\Gamma)K(y){\rm d}y}
\end{align*}
and $G(a/q)$ is the Gauss sum defined by
\begin{equation*}
    G(a/q):=\frac{1}{q^k}\sum_{r\in\N_q^k}{e((a/q)\cdot(r)^\Gamma)}.
\end{equation*}
\indent Let us state some properties of the function $\Psi_t$ and the Gauss sum $G(a/q)$ which will be used later on. For a fixed $c\in(0,1)$ and any real number $t>0$ we have the following estimates for the function $\Psi_t$:
\begin{equation}\label{Phi}
    |\Psi_t(\xi)-\Psi_{ct}(\xi)|\lesssim |t^{A}\xi|_\infty\quad\text{and}\quad|\Psi_t(\xi)-\Psi_{ct}(\xi)|\lesssim |t^{A}\xi|_\infty^{-\sigma/|\Gamma|},
\end{equation}
where the first estimate follows from the cancellation condition \eqref{eq:cancel} and the second one is a consequence of van der Corput's lemma (see \cite[p. 21]{MSZ2} for details). It is well known (see \cite[Lemma 4.14, p. 44]{MSZ3}) that for some $\delta>0$ one has
\begin{equation}\label{gaussum}
    |G(a/q)|\lesssim_k q^{-\delta}.
\end{equation}
\indent Now, we claim that one has
\begin{align}\label{l2approx:3}
    \norm[\Big]{T_{\ZZ^\Gamma}\big[\sum_{j=1}^n (m_{j^\tau}-m_{(j-1)^\tau})\Pi_{\leq j^\tau}-{\bm m}_j\big]f}_{\ell^p(\ZZ^\Gamma)}\lesssim 2^{-j^\tau\sigma/4}\norm{f}_{\ell^p(\ZZ^\Gamma)}.
\end{align}
To see that \eqref{l2approx:3} holds we follow the approach taken in \cite[Lemma 3.38, p. 36]{MSZ3} by appealing to conditions \eqref{eq:size-unif} and \eqref{eq:K-modulus-cont} and Theorem~\ref{thm:IW-mult}. As a result, to prove \eqref{eq:44} it suffices to establish that
\begin{equation}\label{eq:35}
     \sup_{N\in\NN}\sup_{I\in\mathfrak S_N(\DD_\tau)}\norm[\big]{O_{I,N}^2\big(T_{\ZZ^\Gamma}\big[\sum_{j=1}^n{\bm m}_{j}\big]f:n\in\NN_0\big)}_{\ell^p(\ZZ^\Gamma)}\lesssim\norm{f}_{\ell^p(\ZZ^\Gamma)}.
\end{equation}
\indent In the next step, we use \eqref{eq:52} to write that
\begin{equation}\label{eq:36}
    {\bm m}_j(\xi)=\sum_{\substack{S\le j^{\tau u},\\S\in 2^{u\NN}}} {\bm m}_S^j(\xi),
\end{equation}
where ${\bm m}_S^j$ is defined as
\begin{equation}\label{eq:20}
    {\bm m}_S^j(\xi):=\sum_{a/q\in\Sigma_S}G(a/q)\big(\Psi_{2^{j^\tau}}-\Psi_{2^{(j-1)^\tau}}\big)(\xi-a/q)\eta\big(2^{j^\tau(A-\chi I)}(\xi-a/q)\big).
\end{equation}
Consequently, we see that to prove \eqref{eq:35} it is enough to show that
\begin{equation}\label{eq:21}
    \sup_{N\in\NN}\sup_{I\in\mathfrak{S}_N(\DD_\tau^S)}\norm[\big]{O_{I,N}^2(T_{\ZZ^\Gamma}\big[\sum_{\substack{1\le j\le n\\ S^{1/u}\le j^\tau}}{\bm m}_S^j\big]f:n^\tau\ge S^{1/u})}_{\ell^p(\ZZ^\Gamma)}\lesssim S^{-3\varrho}\norm{f}_{\ell^p(\ZZ^\Gamma)},
\end{equation}
where $\DD_\tau^S=\{n\in\NN:n^\tau\ge S^{1/u}\}$ since $S^{-3\varrho}$ is summable in $S\in 2^{u\NN}$.
\subsubsection{Gaussian multiplier and scale distinction}
In order to apply Theorem~\ref{thm:IW-mult} we need to get rid of the Gaussian part $G(a/q)$ in the multiplier \eqref{eq:20}. For this purpose let us define $\Tilde{\eta}(x):=\eta(x/2)$ and two new multipliers
\begin{align*}
    v_S^j(\xi)&:=\sum_{a/q\in\Sigma_S}(\Psi_{2^{j^\tau}}-\Psi_{2^{(j-1)^\tau}})(\xi-a/q)\eta\big(2^{j^\tau(A-\chi I)}(\xi-a/q)\big),\\
    \mu_S(\xi)&:=\sum_{a/q\in\Sigma_S}G(a/q)\Tilde{\eta}\big(2^{S^{1/u}(A-\chi I)}(\xi-a/q)\big).
\end{align*}
We note that one has ${\bm m}_S^j=v_S^j\mu_S$ and consequently to prove \eqref{eq:21} it suffices to show that for every $p\in(1,\infty)$ one has
\begin{align}
    \norm[\big]{T_{\ZZ^\Gamma}[\mu_S]f}_{\ell^p(\ZZ^\Gamma)}&\lesssim S^{-6\varrho}\norm{f}_{\ell^p(\ZZ^\Gamma)},\label{eq:22}\\
    \sup_{N\in\NN}\sup_{I\in\mathfrak S_N(\DD_\tau^S)}\norm[\big]{O_{I,N}^2\big(T_{\ZZ^\Gamma}\big[\sum_{\substack{1\le j\le n\\ S^{1/u}\le j^\tau}}v_S^j\big]f:n^\tau\geq S^{1/u}\big)}_{\ell^p(\ZZ^\Gamma)}&\lesssim S^{3\varrho}\norm{f}_{\ell^p(\ZZ^\Gamma)}\label{eq:23}.
\end{align}
The Gauss sum estimates \eqref{eq:22} were established in \cite[Lemma 3.47 and estimate (3.49) pp. 38--39]{MSZ3}. Hence we may focus on \eqref{eq:23}. Let $\kappa_S=\ceil{S^{2\varrho}}$. We split the left hand side of (\ref{eq:23}) at point $2^{\kappa_S}$ and write
\begin{align*}
    {\rm LHS}(\ref{eq:23})\lesssim&\sup_{N\in\NN}\sup_{I\in\mathfrak S_N(\DD^\tau_{\le S})}\norm[\big]{O_{I,N}^2\big(T_{\ZZ^\Gamma}\big[\sum_{\substack{1\le j\le n\\ S^{1/(\tau u)}\le j}}v_S^j\big]f:n\in[S^{1/(\tau u)},2^{\kappa_S+1}]\big)}_{\ell^p(\ZZ^\Gamma)}\\
    &+\sup_{N\in\NN}\sup_{I\in\mathfrak S_N(\DD_{\ge S})}\norm[\big]{O_{I,N}^2\big(T_{\ZZ^\Gamma}\big[\sum_{\substack{1\le j\le n\\2^{\kappa_S}\le j}}v_S^j\big]f:n\ge2^{\kappa_S})}_{\ell^p(\ZZ^\Gamma)},
\end{align*}
where $\DD^\tau_{\le S}:=\{n\in\NN:n\in[S^{1/(\tau u)},2^{\kappa_S+1}]\}$ and  $\DD_{\ge S}:=\{n\in\NN:n\ge2^{\kappa_S}\}$. We handle with small scales by using the Rademacher--Menschov inequality and Theorem~\ref{thm:IW-mult}. In the case of large scales we make use of the transference principle due to Magyar--Stein--Wainger from \cite{MSW} to obtain estimates from the continuous case. 
\subsubsection{Estimates for small scales}
We closely follow approach taken in \cite{MSZ3} to prove the following estimate
\begin{equation}\label{eq:14}
    \sup_{N\in\NN}\sup_{I\in\mathfrak S_N(\DD^\tau_{\le S})}\norm[\big]{O_{I,N}^2\big(T_{\ZZ^\Gamma}\big[\sum_{\substack{1\le j\le n\\ S^{1/(\tau u)}\le j}}v_S^j\big]f:n\in[S^{1/(\tau u)},2^{\kappa_S+1}]\big)}_{\ell^p(\ZZ^\Gamma)}\lesssim \kappa_S\log(S)\norm{f}_{\ell^p(\ZZ^\Gamma)}.
\end{equation}
By applying the Rademacher--Menshov inequality \cite[Lemma 2.5, p. 534]{MSZ2} to the left hand side of \eqref{eq:14} we see that
\begin{equation*}
    {\rm LHS}(\ref{eq:14})\lesssim\sum_{i=0}^{\kappa_S+1}\norm[\bigg]{\Big(\sum_{j}\big|\sum_{k\in I_j^i}T_{\ZZ^\Gamma}[v_S^k]f\big|^2\Big)^{1/2}}_{\ell^p(\ZZ^\Gamma)},
\end{equation*}
where $I_j^i:=[j2^i,(j+1)2^i)\cap [S^{1/(\tau u)},2^{\kappa_S+1}]$. Hence it suffices to prove that for every $i\le \kappa_S+1$ we have
\begin{align}
    \norm[\bigg]{\Big(\sum_{j}\big|\sum_{k\in I_j^i}T_{\ZZ^\Gamma}[v_S^k]f\big|^2\Big)^{1/2}}_{\ell^p(\ZZ^\Gamma)}\lesssim\log(S)\norm{f}_{\ell^p(\ZZ^\Gamma)}.\label{eq:15}
\end{align}
By Theorem~\ref{thm:IW-mult}, the estimate \eqref{eq:15} is a consequence of its continuous counterpart
\begin{align*}
\norm[\bigg]{ \Big(\sum_{j}\big|\sum_{k\in I_j^i}T_{\RR^\Gamma}\big[(\Psi_{2^{k^\tau}}-\Psi_{2^{(k-1)^\tau}})\eta\big(2^{k^\tau(A-\chi I)}\cdot\big)\big]f\big|^2\Big)^{1/2}}_{L^p(\RR^\Gamma)}
\lesssim \norm{f}_{L^p(\RR^\Gamma)}.
\end{align*}
The above square function estimate follows by  appealing to \eqref{Phi} and standard arguments from the Littlewood--Paley theory. We refer to \cite{MSZ2} for more details, see also discussion below \cite[Theorem 4.3, p. 42]{MSZ3}.  
\subsubsection{Estimates for large scales}
The last thing to show is the estimate for the large scales,
\begin{equation}\label{eq:30}
    \sup_{N\in\NN}\sup_{I\in\mathfrak S_N(\DD_{\ge S})}\norm[\big]{O_{I,N}^2\big(T_{\ZZ^\Gamma}\big[\sum_{\substack{1\le j\le n\\2^{\kappa_S}\le j}}v_S^j\big]f:n\ge2^{\kappa_S})}_{\ell^p(\ZZ^\Gamma)}\lesssim\log(S)\norm{f}_{\ell^p(\ZZ^\Gamma)}.
\end{equation}
We would like to exploit the almost telescoping nature of the multipliers appearing in \eqref{eq:30}. We do this by introducing new approximating multipliers. Let
\begin{equation*}
    \Tilde{v}_S^j(\xi):=\sum_{a/q\in\Sigma_S}(\Psi_{2^{j^\tau}}-\Psi_{2^{(j-1)^\tau}})(\xi-a/q)\eta(2^{2^{\tau\kappa_S}(A-\chi)}(\xi-a/q)).
\end{equation*}
Since $j\ge 2^{\kappa_S}$, the expression
\begin{equation*}
    \eta(2^{j^\tau(A-\chi I)}(\xi-a/q))-\eta(2^{2^{\tau\kappa_S}(A-\chi I)}(\xi-a/q))
\end{equation*}
is nonzero only when $|\xi_\gamma-a_\gamma/q|\gtrsim2^{-j^\tau(|\gamma|-\chi)}$ for some $\gamma\in\Gamma$. Hence, by the van der Corput estimate in (\ref{Phi}) we get
\begin{equation*}
    \norm[\big]{T_{\ZZ^\Gamma}[v_S^j-\Tilde{v}_S^j]f}_{\ell^2(\ZZ^\Gamma)}\lesssim 2^{-j^\tau\chi\sigma/|\Gamma|}\|f\|_{\ell^2(\ZZ^\Gamma)},
\end{equation*}
whereas for any  $p\neq 2$, by property \ref{IW1}, we have 
\begin{equation*}
    \norm[\big]{T_{\ZZ^\Gamma}[v_S^j-\Tilde{v}_S^j]f}_{\ell^p(\ZZ^\Gamma)}\lesssim\big|\Sigma_{\leq j^{\tau u}}\big|\norm{f}_{\ell^p(\ZZ^\Gamma)}\lesssim e^{(|\Gamma|+1)j^{\tau/10}}\norm{f}_{\ell^p(\ZZ^\Gamma)}.
\end{equation*}
By interpolating the above inequalities we get
\begin{equation}\label{eq:S5}
    \norm[\big]{T_{\ZZ^\Gamma}[v_S^j-\Tilde{v}_S^j]f)}_{\ell^p(\ZZ^\Gamma)}\lesssim 2^{-j^\tau\varepsilon}\|f\|_{\ell^p(\ZZ^\Gamma)}
\end{equation}
with some $\varepsilon>0$.
Consequently, the inequality \eqref{eq:30} will follow if we show
\begin{equation}\label{eq:S30}
    \sup_{N\in\NN}\sup_{I\in\mathfrak S_N(\DD^\tau_{\ge S})}\norm[\big]{O_{I,N}^2\big(T_{\ZZ^\Gamma}[\Delta_S^n]f):n\ge2^{\kappa_S})}_{\ell^p(\ZZ^\Gamma)}\lesssim\log(S)\norm{f}_{\ell^p(\ZZ^\Gamma)},
\end{equation}
where
\begin{equation*}
    \Delta_S^n(\xi)=\sum_{a/q\in\Sigma_S}\Psi_{2^{n^\tau}}(\xi-a/q)\eta(2^{2^{\tau\kappa_S}(A-\chi I)}(\xi-a/q)).
\end{equation*}
By using the standard transference argument (see \cite[Section 3.6, pp. 40--41]{MSZ3}) the estimate \eqref{eq:S30} is reduced to showing for every $p\in(1, \infty)$ and $f\in L^p(\RR^{\Gamma})$ the following  uniform oscillation inequality
\begin{equation}\label{eq:46}
    \sup_{N\in\NN}\sup_{I\in\mathfrak S_N(\RR_+)}\norm[\big]{O_{I,N}^2(T_{\RR^\Gamma}[\Psi_t]f:t\in\RR_+)}_{L^p(\RR^\Gamma)}\lesssim\norm{f}_{L^p(\RR^\Gamma)}
\end{equation}
which follows by Theorem~\ref{Thm2}. This completes the proof of Theorem~\ref{Thm1}.
\section{Proof of theorem~\ref{Thm2} for the canonical mapping $(x)^\Gamma$}
The proof of the inequality
\begin{equation}\label{eq:ap3}
    \sup_{N\in\NN}\sup_{I\in\mathfrak{S}_N(\RR_+)}\norm[\big]{O_{I,N}^2(\calH_t f:t\in\RR_+)}_{L^p(\RR^\Gamma)}\lesssim\|f\|_{L^p(\RR^\Gamma)}
\end{equation}
follows the same lines as the proofs of \cite[Theorem 1.1, Theorem 1.2]{jsw} and \cite[Theorem A.2]{MST2}. We will split \eqref{eq:ap3} into long oscillations and short variations. The estimate for long oscillations is obtained by approximation with a suitable dyadic martingale, whereas short variations are estimated by using the Littlewood--Paley theory. In the case of long oscillations the key ingredient is the oscillation inequality for Christ's dyadic martingales which follows from the result of Jones, Kaufman, Rosenblatt and Wierdl \cite[Theorem 6.4, p. 930]{jkrw} (see also \cite[Proposition 2.8]{MSS})
\subsection{Dyadic martingales on the homogeneous spaces}
In the context of homogeneous spaces and dyadic martingales we will follow the notation introduced in \cite{jsw}. Let $A$ be a $d\times d$ matrix whose eigenvalues have positive real parts. For any $t>0$ we consider the dilation given by
\begin{equation}\label{eq:A4}
    t^A:=\exp(A\log t).
\end{equation}
 We say that a quasi-norm $\rho\colon\RR^d\rightarrow [0, \infty)$ is homogeneous with respect to the group of dilations $(t^A:t>0)$ if $\rho(t^Ax)=t\rho(x)$ for any $x\in\RR^d$ and $t>0$. Recall, that for a given group of of dilations $(t^A:t>0)$ by \cite[Proposition 1.7, Definition 1.8]{SW3} there exists a quasi-norm $\rho$ which is homogeneous with respect to that group. We note that $\RR^d$ equipped with a homogeneous quasi-norm $\rho$ and Lebesgue measure is a space of homogeneous type with the quasi-metric induced by $\rho$.
 As it was shown by Christ {\cite[Theorem 11]{C}} for any given space of homogeneous type there exists a system of $D$-dyadic cubes $\{Q_{\alpha}^k: k\in\ZZ, \alpha\in I_k\}$, with $D>1$. For a locally integrable function $f$ the martingale sequence associated with the system of dyadic cubes $\{Q_\alpha^k\}$ is of the form
\begin{equation}\label{christmartin}
\EE_k f(x) := \EE[f | \calF_k] (x):=\frac{1}{|Q_{\alpha}^k|} \int_{Q_{\alpha}^k}f(y){\: \rm d}y
\end{equation}
provided $Q_{\alpha}^k$ is the unique dyadic cube containing $x\in\RR^d$.
\begin{theorem}[{\cite[Theorem 6.4, p. 930]{jkrw}}]\label{thm:martinoscilation}
For every $p\in(1, \infty)$ there exists a constant $C_p>0$ such that
\begin{equation*}
    \sup_{N\in\NN}\sup_{I\in\mathfrak S_N(\ZZ)}\norm[\big]{O^2_{I,N}(\EE_nf:n\in\ZZ)}_{L^p(\RR^d)}\leq C_p\norm{f}_{L^p(\RR^d)}.
\end{equation*}
\end{theorem}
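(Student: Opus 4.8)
\textit{Proof strategy.} The statement uses nothing about the Christ cubes beyond the fact that $(\EE_n)_{n\in\ZZ}$ are the conditional expectation operators of the filtration $(\calF_n)_{n\in\ZZ}$ from \eqref{christmartin}, so the plan is to give a purely martingale-theoretic argument, uniform in the sampling sequence. Fix $N\in\NN$ and $I=(I_1<\dots<I_{N+1})\in\mathfrak{S}_N(\ZZ)$, and for $1\le j\le N$ put $h_j:=\EE_{I_{j+1}}f-\EE_{I_j}f$. First I would record the elementary identity
\[
\EE_t f-\EE_{I_j}f=\EE_t h_j\qquad\text{whenever } I_j\le t<I_{j+1},
\]
which is immediate from the tower property $\EE_a\EE_b=\EE_{\min\{a,b\}}$ (the hypothesis $t\ge I_j$ gives $\EE_t\EE_{I_j}f=\EE_{I_j}f$, and $t<I_{j+1}$ gives $\EE_t\EE_{I_{j+1}}f=\EE_t f$). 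Hence, with $\calM g:=\sup_{n\in\ZZ}\abs{\EE_n g}$ denoting the Doob martingale maximal operator, for each block one has $\sup_{I_j\le t<I_{j+1}}\abs{\EE_t f-\EE_{I_j}f}=\sup_{I_j\le t<I_{j+1}}\abs{\EE_t h_j}\le\calM h_j$, and therefore the pointwise bound
\[
O_{I,N}^2(\EE_n f:n\in\ZZ)\le\Big(\sum_{j=1}^N\abs{\calM h_j}^2\Big)^{1/2}.
\]

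Next I would invoke two classical facts. The first is the vector-valued Doob maximal inequality: for every $p\in(1,\infty)$ and every sequence $(g_j)_j$,
\[
\norm[\Big]{\Big(\sum_j\abs{\calM g_j}^2\Big)^{1/2}}_{L^p(\RR^d)}\lesssim_p\norm[\Big]{\Big(\sum_j\abs{g_j}^2\Big)^{1/2}}_{L^p(\RR^d)},
\]
which is classical for the martingale (in particular dyadic) maximal operator. The second is Burkholder's square-function inequality applied to the sampled martingale $(\EE_{I_j}f)_{1\le j\le N+1}$ relative to $(\calF_{I_j})_j$, whose consecutive differences are precisely the $h_j$:
\[
\norm[\Big]{\Big(\sum_{j=1}^N\abs{h_j}^2\Big)^{1/2}}_{L^p(\RR^d)}\lesssim_p\sup_{1\le j\le N+1}\norm{\EE_{I_j}f}_{L^p(\RR^d)}\le\norm{f}_{L^p(\RR^d)}.
\]
Chaining the three displays gives $\norm{O_{I,N}^2(\EE_n f:n\in\ZZ)}_{L^p(\RR^d)}\lesssim_p\norm{f}_{L^p(\RR^d)}$ with a constant independent of $N$ and of $I$, whence the supremum over $N\in\NN$ and over $I\in\mathfrak{S}_N(\ZZ)$ is controlled.

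The only point requiring real care is the first step: one must peel off from the $j$-th block increment exactly the localized martingale increment $h_j$, so that the supremum over $t$ in that block collapses to the single maximal function $\calM h_j$ \emph{and} so that the family $(h_j)_j$ reassembles into an honest martingale difference sequence to which Burkholder applies; everything afterwards is off the shelf. For $p=2$ this step is completely transparent, since then $\sum_j\norm{h_j}_{L^2(\RR^d)}^2\le\norm{f}_{L^2(\RR^d)}^2$ by orthogonality of martingale increments and Doob's $L^2$ maximal inequality closes the estimate at once — the role of Burkholder and of the $\ell^2$-valued Doob inequality for general $p$ is simply to replace this orthogonality. A final routine check is that the two-sided index set $\ZZ$ creates no difficulty: only the finitely many levels $I_1,\dots,I_{N+1}$ enter the argument and all bounds are uniform, so no limiting behaviour at $\pm\infty$ needs to be invoked.
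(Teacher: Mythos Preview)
The paper does not supply its own proof of this statement; it is quoted verbatim from \cite[Theorem~6.4]{jkrw} (with a pointer also to \cite[Proposition~2.8]{MSS}) and used as a black box in the proof of Theorem~\ref{oscillationmeasure}. So there is nothing in the paper to compare your argument against.

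Your argument is correct and is in fact the standard martingale-theoretic proof: dominate each block pointwise by $\calM h_j$ via the tower property, apply the $\ell^2$-valued Doob maximal inequality, and finish with Burkholder's square-function estimate for the sampled martingale $(\EE_{I_j}f)_j$. One small caveat on conventions: in this paper the Christ cubes $Q_\alpha^k$ have scale $\sim D^k$ with $D>1$ (compare the normalisation of $\phi_{D^k}$ in Lemma~\ref{martinapprox}), so the filtration $(\calF_k)_{k\in\ZZ}$ is \emph{decreasing} in $k$ and the tower identity reads $\EE_a\EE_b=\EE_{\max\{a,b\}}$ rather than $\EE_{\min\{a,b\}}$. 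This only flips which endpoint of the block serves as the anchor; with $h_j=\EE_{I_j}f-\EE_{I_{j+1}}f$ one gets $\EE_t f-\EE_{I_{j+1}}f=\EE_t h_j$ for $I_j\le t<I_{j+1}$, hence $|\EE_t f-\EE_{I_j}f|\le|\EE_t h_j|+|h_j|\le 2\calM h_j$, and the remainder of your chain is unchanged.
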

The next result which follows from \cite{jsw} concern the approximation by martingales associated with Christ's dyadic cubes.
\begin{lemma}[{cf. \cite[Theorem 1.1]{jsw}}]\label{martinapprox}
    Let $\phi$ be a Schwartz function such that $\int\phi=1$ and let $\phi_{D^k}(x)=D^{-{\rm tr} (A)k}\phi(D^{-kA}x)$ where $D>1$ is associated with the system of $D$-dyadic cubes $\{Q_{\alpha}^k\}$. Then the operator
    \begin{equation*}
    \calS f(x)=\big(\sum_{k\in\ZZ}|\phi_{D^k}\ast f(x) -\mathbb{E}_k f(x)|^2\big)^{1/2}
    \end{equation*}
    is bounded on $L^p(\RR^d)$ for $p\in(1,\infty)$. Moreover, for $p=1$ the operator $\calS$ is of weak type (1,1).
\end{lemma}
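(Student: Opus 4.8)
The plan is to prove Lemma~\ref{martinapprox} by a standard Littlewood--Paley square function argument adapted to the non-isotropic dilation structure $(D^{kA})_{k\in\ZZ}$ and to Christ's dyadic cubes. First I would decompose $\phi_{D^k}\ast f - \EE_k f = (\phi_{D^k}\ast f - \EE_k(\phi_{D^k}\ast f)) + (\EE_k(\phi_{D^k}\ast f) - \EE_k f)$, so that the square function $\calS$ is controlled by the sum of two square functions $\calS_1 f = (\sum_k |\phi_{D^k}\ast f - \EE_k \phi_{D^k}\ast f|^2)^{1/2}$ and $\calS_2 f = (\sum_k |\EE_k(\phi_{D^k}\ast f - f)|^2)^{1/2}$. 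For $\calS_2$, I would use that $\EE_k$ is a conditional expectation, hence a contraction on every $L^p$, and combine it with the fact that $\phi_{D^k}\ast f \to f$ with good quantitative control: writing $\phi_{D^k}\ast f - f$ as a telescoping/integral over the Littlewood--Paley pieces $\psi_{D^j}\ast f$ for a suitable $\psi$ with $\int\psi = 0$, and invoking the non-isotropic Littlewood--Paley inequality (which holds since the eigenvalues of $A$ have positive real parts, so $t^A$ is a genuine group of dilations and the associated maximal and square functions are $L^p$-bounded by \cite{SW3} and the homogeneous-space machinery).

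For the main term $\calS_1$, the key point is that $\phi_{D^k}\ast f(x) - \EE_k(\phi_{D^k}\ast f)(x)$ measures the oscillation of the smooth function $\phi_{D^k}\ast f$ across the dyadic cube $Q^k_\alpha \ni x$, whose $\rho$-diameter is comparable to $D^k$. Since $\phi_{D^k}\ast f$ is essentially constant at scale $D^k$ in the $\rho$-quasi-metric — more precisely its non-isotropic gradient is $O(D^{-k})$ in the appropriate sense — this difference is pointwise bounded by an average of $|\psi_{D^j}\ast f|$ over $j$ near $k$, for a Schwartz bump $\psi$ with vanishing integral, with rapidly decaying coefficients in $|j-k|$. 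Squaring, summing in $k$, using Minkowski's inequality in $\ell^2$ to move the sum over $|j-k|$ outside, and then applying the vector-valued non-isotropic Littlewood--Paley / Fefferman--Stein inequality for $(\psi_{D^j}\ast f)_j$ yields the $L^p$ bound for $1<p<\infty$. The weak-type $(1,1)$ statement would follow by the same decomposition together with the Calder\'on--Zygmund theory on the space of homogeneous type $(\RR^d,\rho,dx)$: one verifies that $\calS$ is given by a vector-valued convolution-type kernel satisfying the H\"ormander condition with respect to $\rho$, and its $L^2$ boundedness from the previous step gives weak type $(1,1)$.

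I expect the main obstacle to be the pointwise comparison estimate $|\phi_{D^k}\ast f(x) - \EE_k(\phi_{D^k}\ast f)(x)| \lesssim \sum_{j} c_{|j-k|}\, |\psi_{D^j}\ast f(x)|$, or rather its correct formulation: Christ's cubes are only comparable to $\rho$-balls up to constants and they are not exact dilates of one another, so one cannot literally differentiate $\phi_{D^k}\ast f$ and integrate along a path inside $Q^k_\alpha$ in a scale-invariant way. The clean way around this is to observe that the oscillation of $\phi_{D^k}\ast f$ over a set of $\rho$-diameter $\lesssim D^k$ is dominated by $\sup_{|h|_\rho \lesssim D^k} |\phi_{D^k}\ast f(x) - \phi_{D^k}\ast f(x+h)|$, to write $\phi_{D^k}\ast f(x+h) - \phi_{D^k}\ast f(x) = (\tau_h\phi_{D^k} - \phi_{D^k})\ast f(x)$, and to note that $\tau_h\phi_{D^k} - \phi_{D^k}$ is, after rescaling by $D^{-kA}$, a fixed Schwartz function with integral zero depending on the normalized shift $D^{-kA}h$ (which is bounded). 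One then expands this mean-zero bump against a Littlewood--Paley resolution of the identity to extract the factors $\psi_{D^j}\ast f$ with acceptable decay; the rest is the routine square-function bookkeeping sketched above, and for the details one can follow \cite{jsw} verbatim, since exactly this approximation lemma is what underlies their jump and variational estimates.
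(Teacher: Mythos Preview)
Your proposal is correct and takes essentially the same approach as the paper: the paper's own proof consists of the single sentence ``The proof is a repetition of arguments presented in proofs of \cite[Lemma~3.2 and Theorem~1.1, pp.~19--20]{jsw},'' and your sketch is precisely an outline of those arguments, culminating in the same reference. Your decomposition into $\calS_1$ and $\calS_2$, the oscillation/shifted-bump argument for $\calS_1$, and the appeal to non-isotropic Littlewood--Paley theory and homogeneous-space Calder\'on--Zygmund theory for the weak $(1,1)$ bound are exactly the ingredients used in \cite{jsw}.
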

\begin{proof}
The proof is a repetition of arguments presented in proofs of \cite[Lemma 3.2 and Theorem 1.1 pp. 19--20]{jsw}.
\end{proof}
The next result is a counterpart of \cite[Theorem 1.1]{jsw} in the context of the oscillation seminorm. Let $\sigma$ be a compactly supported finite Borel measure on $\RR^d$. Let us consider dilates of $\sigma$ defined by
\begin{equation}\label{eq:ap2}
    \langle \sigma_t,f\rangle=\int_{\RR^d}f(t^Ax){\rm d}\sigma(x)
\end{equation}
where $t^A$ is as in \eqref{eq:A4}. We additionally assume that the Fourier transform satisfies the following size condition
\begin{equation}\label{eq:ap1}
    |\hat{\sigma}(\xi)|\lesssim |\xi|^{-a}\qquad\text{for some}\qquad a>0.
\end{equation}
\begin{theorem}\label{oscillationmeasure}
Assume that $\sigma$ is a compactly supported finite Borel measure on $\RR^d$ satisfying \eqref{eq:ap1} for some $a>0$. Let $\sigma_t$ be as in \eqref{eq:ap2}. Then for $p\in(1,\infty)$ one has
\begin{equation}\label{eq:A5}
    \sup_{N\in\NN}\sup_{I\in\mathfrak{S}_N(\ZZ)}\norm{O_{I,N}^2(f\ast\sigma_{D^k}:k\in\ZZ)}_{L^p(\RR^d)}\lesssim_p\|f\|_{L^p(\RR^d)},\qquad f\in L^p(\RR^d),
\end{equation}
where $D>1$ is associated with the system of $D$-dyadic cubes $\{Q_{\alpha}^k\}$.
\end{theorem}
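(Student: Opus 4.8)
The plan is to follow the scheme of \cite{jsw} and reduce \eqref{eq:A5} to the oscillation inequality for Christ's dyadic martingales (Theorem~\ref{thm:martinoscilation}), the martingale approximation of Lemma~\ref{martinapprox}, and a classical square function estimate. Write $c:=\hat\sigma(0)=\int_{\RR^d}\dif\sigma$, let $\phi$ be the Schwartz function with $\int\phi=1$ from Lemma~\ref{martinapprox}, and set $\nu:=\sigma-c\phi$, so that $\hat\nu(0)=\hat\sigma(0)-c\,\hat\phi(0)=0$. For every $k\in\ZZ$ we then have the pointwise splitting
\begin{equation*}
    f\ast\sigma_{D^k}=f\ast\nu_{D^k}+c\,\big(\phi_{D^k}\ast f-\EE_k f\big)+c\,\EE_k f,
\end{equation*}
with $\sigma_{D^k}$ as in \eqref{eq:ap2} and $\phi_{D^k}$, $\EE_k$ as in Lemma~\ref{martinapprox} and \eqref{christmartin}. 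The seminorm $O_{I,N}^2$ is subadditive in its argument — it is the $\ell^2_j$ norm of the sequence of suprema $\sup_{I_j\le t<I_{j+1}}|\mathfrak a_t-\mathfrak a_{I_j}|$, so the triangle inequality applies — hence the left hand side of \eqref{eq:A5} is bounded by the sum of the three contributions produced by the three summands above.

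The martingale contribution $\sup_N\sup_I\norm{O_{I,N}^2(\EE_k f:k\in\ZZ)}_{L^p(\RR^d)}$ is $\lesssim_p\norm{f}_{L^p(\RR^d)}$ directly by Theorem~\ref{thm:martinoscilation}. For the other two I use the elementary pointwise bound
\begin{equation*}
    O_{I,N}^2(a_k:k\in\ZZ)\le 2\Big(\sum_{k\in\ZZ}|a_k|^2\Big)^{1/2},
\end{equation*}
valid for any sequence $(a_k)_{k\in\ZZ}$ and any $I\in\mathfrak S_N(\ZZ)$: it follows from $|a_k-a_{I_j}|^2\le 2|a_k|^2+2|a_{I_j}|^2$ together with the observation that the maximizing indices — one in each of the disjoint blocks $[I_j,I_{j+1})$ — are pairwise distinct, as are the $I_j$. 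Applying this with $a_k=\phi_{D^k}\ast f-\EE_k f$ and recalling the definition of the operator $\calS$ from Lemma~\ref{martinapprox}, the second contribution is $\le 2|c|\,\norm{\calS f}_{L^p(\RR^d)}\lesssim_p\norm{f}_{L^p(\RR^d)}$. Applying it with $a_k=f\ast\nu_{D^k}$, the first contribution is $\le 2\,\norm[\big]{(\sum_{k\in\ZZ}|f\ast\nu_{D^k}|^2)^{1/2}}_{L^p(\RR^d)}$, so the matter is reduced to a square function estimate for the dilated measures $\nu_{D^k}$.

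It remains to establish, for every $p\in(1,\infty)$,
\begin{equation*}
    \norm[\Big]{\Big(\sum_{k\in\ZZ}|f\ast\nu_{D^k}|^2\Big)^{1/2}}_{L^p(\RR^d)}\lesssim_p\norm{f}_{L^p(\RR^d)}.
\end{equation*}
The measure $\nu=\sigma-c\phi$ has $\hat\nu(0)=0$; since $\sigma$ is compactly supported it has a finite first moment, so $|\hat\sigma(\xi)-\hat\sigma(0)|\lesssim|\xi|$, and combining this with the Schwartz decay of $\hat\phi$ and with \eqref{eq:ap1} we obtain $|\hat\nu(\xi)|\lesssim\min\{|\xi|,|\xi|^{-a}\}$. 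Writing $\widehat{\nu_{D^k}}(\xi)=\hat\nu(D^{kA^{T}}\xi)$ and using that the eigenvalues of $A$ (hence of $A^{T}$) have positive real parts, the dilates $D^{kA^{T}}\xi$ run geometrically through low and high frequencies as $k$ varies over $\ZZ$, whence $\sum_{k\in\ZZ}|\hat\nu(D^{kA^{T}}\xi)|^2\lesssim 1$ uniformly in $\xi$; by Plancherel this gives the case $p=2$. For the remaining $p$ one decomposes $\hat\nu=\sum_{\ell\ge 0}\hat\nu\,\beta_\ell$ with a Littlewood--Paley partition of unity $(\beta_\ell)$ adapted to the dilations $t^{A}$, $\beta_\ell$ supported where a homogeneous quasi-norm of the frequency is comparable to $D^{\ell}$; the $\ell$-th piece produces a smooth anisotropic square function whose $L^p$ norm is $\lesssim_p(1+\ell)^{C}D^{-\ell a}$ (gaining $D^{-\ell a}$, respectively $D^{-\ell}$ near the origin, from the decay of $\hat\nu$, against an at most polynomial loss in $\ell$ from the spatial spreading of the $\ell$-th kernel), and summing in $\ell$ yields the bound. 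This square function estimate is exactly what is proved in the course of \cite[Theorem~1.1]{jsw} (and what underlies Lemma~\ref{martinapprox}), so I would simply invoke it. Collecting the three contributions proves \eqref{eq:A5}.

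The only genuine difficulty is this last square function estimate for $\nu_{D^k}$; everything else is bookkeeping — the subadditivity and $\ell^2$-domination of $O_{I,N}^2$, plus direct appeals to Theorem~\ref{thm:martinoscilation} and Lemma~\ref{martinapprox}. The one mild technical point within it is aligning the Euclidean Fourier decay of $\hat\nu$ at $0$ and at $\infty$ with the anisotropic group $(t^{A})$, which is handled through a quasi-norm homogeneous with respect to $(t^{A})$, available by \cite[Proposition~1.7, Definition~1.8]{SW3} as recalled above.
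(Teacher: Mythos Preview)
Your proof is correct and follows essentially the same route as the paper: split off a mean-zero piece (handled by a square function estimate), compare the remaining smooth average to the Christ dyadic martingale via Lemma~\ref{martinapprox}, and invoke Theorem~\ref{thm:martinoscilation}. The only difference is cosmetic---the paper writes $\sigma=\phi\ast\sigma+(\delta_0-\phi)\ast\sigma$ with $\phi\in C_c^\infty$ so that the mean-zero piece $(\delta_0-\phi)\ast\sigma$ is compactly supported and \cite[Theorems~A,~B]{DR86} apply verbatim, while you write $\sigma=c\phi+\nu$ and argue the square function bound for $\nu$ directly; both decompositions feed into the same three ingredients.
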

\begin{proof}
The proof is a simple repetition of the arguments given in the proof of \cite[Theorem 1.1]{jsw} but we include it for the sake of completeness. We can assume that $\int{\rm d}\sigma\neq0$ since we see that the left hand side of \eqref{eq:A5} is controlled by the square function $\big(\sum_{k\in\ZZ}|f\ast\sigma_{D^k}(x)|^2)^{1/2}$, and if $\hat{\sigma}(0)=0$ then the known estimates from \cite[Theorem A, Theorem B]{DR86} can be used to control it on $L^p(\RR^d)$. Without loss of generality assume that $\int{\rm d}\sigma=1$. Let $\phi\in C_{\rm c}^\infty(\RR^d)$ be such that $\int\phi=1$. Then one may write the following decomposition $\sigma=\phi\ast\sigma+(\delta_0-\phi)\ast\sigma,$ where $\delta_0$ is the Dirac measure at 0. Therefore we may write $f\ast\sigma_{D^k}(x)=\calL_k f(x)+\calH_k f(x),$ where
\begin{equation*}
    \calL_k f(x)=f\ast(\phi\ast\sigma)_{D^k}(x)\quad\text{and}\quad \calH_k f(x)=f\ast\big((\delta_0-\phi)\ast\sigma\big)_{D^k}(x).
\end{equation*}
Hence, by the triangle inequality it is enough to prove
\begin{align}\label{eq:A6}
    \sup_{N\in\NN}\sup_{I\in\mathfrak{S}_N(\ZZ)}\norm{O_{I,N}^2(\calL_k f:k\in\ZZ)}_{L^p(\RR^d)}\lesssim_p\|f\|_{L^p(\RR^d)}
\end{align}
and
\begin{equation}\label{eq:A7}
    \sup_{N\in\NN}\sup_{I\in\mathfrak{S}_N(\ZZ)}\norm{O_{I,N}^2(\calH_k f:k\in\ZZ)}_{L^p(\RR^d)}\lesssim_p\|f\|_{L^p(\RR^d)}.
\end{equation}
At first we handle the estimate \eqref{eq:A7}. Since the oscillation seminorm is controlled by the square function it is enough to prove
\begin{equation}\label{eq:A8}
    \norm[\big]{\big(\sum_{k\in\ZZ}|\calH_kf|^2\big)}_{L^p(\RR^d)}\lesssim_p\|f\|_{L^p(\RR^d)}.
\end{equation}
We note that $(\delta_0-\phi)\ast\sigma$ is a compactly supported measure with the vanishing mean value which satisfies condition \eqref{eq:ap1}. Hence by the known results from \cite[Theorem A, Theorem B]{DR86} we see that \eqref{eq:A8} holds.
The estimates for the low frequency part $\calL_k f$ will follow from the martingale estimates. Let $(\mathbb{E}_k)_{k\in\ZZ}$ be the dyadic martingale sequence associated with the dyadic cubes related to the dilation $t^A$. Then we may write
\begin{align*}
    \sup_{N\in\NN}\sup_{I\in\mathfrak S(\ZZ)}\norm{O_{I,N}^2(\mathcal{L}_k f:k\in\ZZ)}_{L^p(\RR^d)}\lesssim& \sup_{N\in\NN}\sup_{I\in\mathfrak S(\ZZ)}\norm{O_{I,N}^2(\calD_k f:k\in\ZZ)}_{L^p(\RR^d)}\\
    &+\sup_{N\in\NN}\sup_{I\in\mathfrak S(\ZZ)}\norm{O_{I,N}^2(\mathbb{E}_k f:k\in\ZZ)}_{L^p(\RR^d)},
\end{align*}
where $$\calD_k f(x):=f\ast(\phi\ast\sigma)_{D^k}(x)-\EE_k f(x).$$ 
Since by Theorem~\ref{thm:martinoscilation} we know that the oscillation inequality holds for the martingale $\EE_k f$, we only need to handle the part with $\calD_k f$. As before, we are reduced to show that for any $p\in(1,\infty)$ one has
\begin{equation*}
    \norm[\Big]{\big(\sum_{k\in\ZZ}|\calD_k f|^2\big)^{1/2}}_{L^p(\RR^d)}\lesssim_p\|f\|_{L^p(\RR^d)},
\end{equation*}
which follows by Lemma~\ref{martinapprox} since $\phi\ast\sigma$ is a Schwartz function such that $\int\phi\ast\sigma=1$.
\end{proof}
\subsection{Oscillation inequality for the operator $\calH_t$}
Now we are ready to give a proof of \eqref{eq:ap3}. Due to the differential nature of the oscillation seminorm it is enough to prove \eqref{eq:ap3} for the ``complement'' Radon transform given by
\begin{equation*}
    \Tilde{\calH}_t f(x):=\int_{\Omega_t^c}f(x-(y)^\Gamma)K(y){\rm d}y.
\end{equation*}
As usual in this context we divide our seminorm into long oscillations and short variations
\begin{align*}
    \sup_{N\in\NN}\sup_{I\in\mathfrak S_N(\RR_+)}\norm[\big]{O_{I,N}^2(\Tilde{\calH}_t f:t\in\RR_+)}_{L^p(\RR^\Gamma)}\lesssim& \sup_{N\in\NN}\sup_{I\in\mathfrak S_N(\ZZ)}\norm[\big]{O_{I,N}^2(\Tilde{\calH}_{D^k} f:k\in\ZZ)}_{L^p(\RR^\Gamma)}\\
    &+\norm[\Big]{\Big(\sum_{k\in\ZZ}V^2\big(\Tilde{\calH}_t f:t\in[D^k,D^{k+1})\big)^2\Big)^{1/2}}_{L^p(\RR^\Gamma)}.
\end{align*}
The estimate for the short variations follows the same lines as in \cite[Section 9.3]{MST2}, since the Littlewood--Paley theory is still available in the context of $D$-dyadic numbers -- we omit the details. We focus on proving estimates for the long oscillations. The presented approach is well known (see \cite{DR86,jsw,MSZ2}) hence we will not go into precise details. At the beginning, we see that we can express $\Tilde{\calH}_{D^k}$ as a telescoping sum
\begin{equation*}
    \Tilde{\calH}_{D^k} f(x):=\sum_{j\ge k}\mu_{D^j}\ast f(x),\qquad\text{with}\qquad \mu_{D^k}\ast f(x):=\int_{\Omega_{D^{k+1}}\setminus\Omega_{D^k}}f(x-(y)^\Gamma)K(y){\rm d}y.
\end{equation*}
Now, let $\varphi$ be a smooth compactly supported function such that $\hat{\varphi}(0)=1$. Then we employ the following decomposition (cf. \cite[Theorem E]{DR86})
\begin{align}\label{decompostionRD}
   \Tilde{\calH}_{D^k} f=\varphi_{D^k}\ast\calH f-\big(\varphi\ast\sum_{j<0}\mu_{D^j}\big)_{D^k}\ast f+\big(\sum_{j\ge0}(\delta_0-\varphi)\ast\mu_{D^{j}}\big)_{D^k}\ast f,
\end{align}
where
\begin{align*}
    \calH f(x):={\rm p.v.}\int_{\RR^k}f(x-(y)^\Gamma)K(y){\rm d}y.
\end{align*}
The oscillation inequality for the term $\varphi_{D^k}\ast\calH f$ follows by Theorem~\ref{oscillationmeasure} and by the fact that $\calH$ is a bounded operator on $L^p$. For the second term in \eqref{decompostionRD} it is enough to show
\begin{equation*}
\norm[\Big]{\Big(\sum_{k\in\ZZ}\big|\big(\varphi\ast\sum_{j<0}\mu_{D^j}\big)_{D^k}\ast f|^2\Big)^{1/2}}_{L^p(\RR^\Gamma)}\lesssim\|f\|_{L^p(\RR^\Gamma)}.
\end{equation*}
To prove the above estimate we note that $\varphi\ast\sum_{j<0}\mu_{D^j}$ is a Schwartz function with mean value zero and use the results from \cite[Theorem A, Theorem B]{DR86}. To estimate the third term occurring in \eqref{decompostionRD} it is enough to show that for some  positive constant $c_p$ one has
\begin{equation}\label{eq:A11}
    \norm[\Big]{\Big(\sum_{k\in\ZZ}\big| f\ast\big((\delta_0-\varphi)\ast\mu_{D^j}\big)_{D^k}\big|^2\Big)^{1/2}}_{L^p(\RR^\Gamma)}\lesssim D^{-c_p j}\|f\|_{L^p(\RR^\Gamma)}.
\end{equation}
The above estimate can be proved by using the Littlewood–Paley theory, for details see \cite[p. 21]{jsw}. This ends the proof of Theorem~\ref{Thm2} for the canonical mapping $(y)^\Gamma$.

\end{document}